\renewcommand{\thesubfigure}{\thefigure.\arabic{subfigure}}
\renewcommand{\p@subfigure}{}
\renewcommand{\@thesubfigure}{\thesubfigure:\hskip\subfiglabelskip}
\newcommand{\cl}{\mbox{cl}}
\newcommand{\Nrv}{\mbox{Nrv}}
\newcommand{\near}{\delta} 
\newcommand{\hnear}[1]{\text{$\delta^{#1}$}}
\newcommand{\hsn}[1]{\mathop{\delta^{#1}}\limits^{\doublewedge}}
\newcommand{\hdnear}[1]{\delta_{\Phi}^{#1}}
\newtheorem{example}{Example}
\newtheorem{remark}{Remark}
\newtheorem{definition}{Definition}
\newtheorem{lemma}{Lemma}
\newtheorem{theorem}{Theorem}
\begin{document}

\title[Hyperconnected CW complex]{ Hyperconnected Relator Spaces.\\   CW Complexes and Continuous Function Paths that are Hyperconnected}

\author[M.Z. Ahmad]{M.Z. Ahmad$^{\alpha}$}
\email{ahmadmz@myumanitoba.ca}
\address{\llap{$^{\alpha}$\,}
Computational Intelligence Laboratory,
University of Manitoba, WPG, MB, R3T 5V6, Canada}
\thanks{\llap{$^{\alpha}$\,}The research has been supported by University of Manitoba Graduate Fellowship and Gorden P. Osler Graduate Scholarship.}

\author[J.F. Peters]{J.F. Peters$^{\beta}$}
\email{James.Peters3@umanitoba.ca}
\address{\llap{$^{\beta}$\,}
Computational Intelligence Laboratory,
University of Manitoba, WPG, MB, R3T 5V6, Canada and
Department of Mathematics, Faculty of Arts and Sciences, Ad\.{i}yaman University, 02040 Ad\.{i}yaman, Turkey}
\thanks{\llap{$^{\beta}$\,}The research has been supported by the Natural Sciences \&
Engineering Research Council of Canada (NSERC) discovery grant 185986 
and Instituto Nazionale di Alta Matematica (INdAM) Francesco Severi, Gruppo Nazionale per le Strutture Algebriche, Geometriche e Loro Applicazioni grant 9 920160 000362, n.prot U 2016/000036.}

\subjclass[2010]{Primary 54E05 (Proximity); Secondary 68U05 (Computational Geometry)}

\date{}

\dedicatory{Dedicated to J.H.C. Whitehead and Som Naimpally}

\begin{abstract}
This article introduces proximal cell complexes in a hyperconnected space.   Hyperconnectedness encodes how collections of path-connected sub-complexes in a Alexandroff-Hopf-Whitehead CW space are near to or far from each other. 
Several main results are given, namely, a hyper-connectedness form of CW (Closure Finite Weak topology) complex, the existence of continuous functions that are paths in hyperconnected relator spaces and hyperconnected chains with overlapping interiors that are path graphs in a relator space. 
An application of these results is given in terms of the definition of cycles using the centroids of triangles.  
\end{abstract}

\maketitle

\section{Introduction}\label{sec:intro}
This paper revisits the notions of path and connectedness in cell complexes that have an Alexandroff-Hopf-Whitehead closure-finite, weak topology on them.   J.H.C. Whitehead introduced CW topology more than 80 years ago, in his paper published in 1939~[pp. 315-317]\cite{Whitehead1939homotopy} and elaborated in 1949~~\cite[\S 5, p. 223]{Whitehead1949BAMS-CWtopology}.  This discovery was derived from two conditions for a cell complex introduced by P. Alexandroff and H. Hopf in their topology of complexes~\cite[\S III, starting on page 124]{AlexandroffHopf1935Topologie}.    To gain control of the relationships in CW complexes, we consider cell complexes equipped with a collection of one or more proximities called a proximal relator~\cite{Peters2016relator}, an extension of a Sz\'{a}z relator~\cite{Szaz1987}, which is a non-void collection of connectedness proximity relations on a nonempty cell complex $K$.   A space equipped with a proximal relator is called a \emph{proximal relator space}.    A natural outcome of our revisiting connectedness in CW complexes is the introduction of what is known as hyperconnectedness\cite[Sec.~2]{ahmad2018maximal}, which encodes how collections of path-connected sub-complexes in a space are near to or far from each other, either spatially~\cite{Naimpally70withWarrack,Naimpally2013} or descriptively~\cite{DiConcilio2018MCSdescriptiveProximities}.   A main result in this paper is that hyperconnected chains with overlapping interiors in a relator space are path graphs (see Theorem~\ref{thm:pathgraph}).

\section{Preliminaries}\label{sec:prelim}

Let $K$ denote a planar cell complex containing three types of cells, namely, vertex (0-cell), edge (1-cell) and filled triangle (2-cell).  Each $K$ has an Alexandroff-Hopf-Whitehead~\cite{Whitehead1949BAMS-CWtopology, AlexandroffHopf1935Topologie} closure finite, weak topology defined on it.   


Let $A$ be a nonempty set of path-connected vertices in the cell complex $K$ on a bounded region of the Euclidean plane $X$, $p$ a vertex in $A$.  An \emph{\bf open ball} $B_r(p)$ with radius $r$ is defined by
\[
B_r(p) = \left\{q\in K: |p - q| < r\right\}.
\]
The \emph{closure} of $A$ (denoted by $\cl A$) is defined by
\[
\cl A = \left\{q\in X: B_r(q)\subset A\ \mbox{for some $r$}\right\}\ \mbox{(Closure of set $A$)}.
\]
A CW complex  $K$ is a Hausdorff space with a decomposition satisfying the following conditions:
\begin{compactenum}[$1^o$]
\item \textbf{Closure finiteness:} closure of each cell $\cl A,\, A= \sigma^n \in K$, intersects a finite number of other cells.
\item \textbf{Weak topology:} $A \subset K$ is closed, provided $A \cap \cl \sigma^n \neq \emptyset$ is closed for all $\sigma^n \in K$.
\end{compactenum}

A complex $K$ with $n$ cells is denoted by $\sigma^n$.  A union of $\sigma^j \in K, \, j \leq n$ is called a \emph{n-skeleton} $K^n$. A \emph{fibre bundle},$(E,B,\pi,F)$, is a structure that describes a relation,$\pi:E \rightarrow B$, between the \emph{total space} $E$ and the \emph{base space} $B$. Here $\pi$ is a continuous surjection and $F \subset E$ is the \emph{fibre}. 
\begin{figure}
	\centering
	\begin{subfigure}[Descriptive CW complex]
		{\includegraphics[scale=0.5,bb=125 560 220 690,clip]{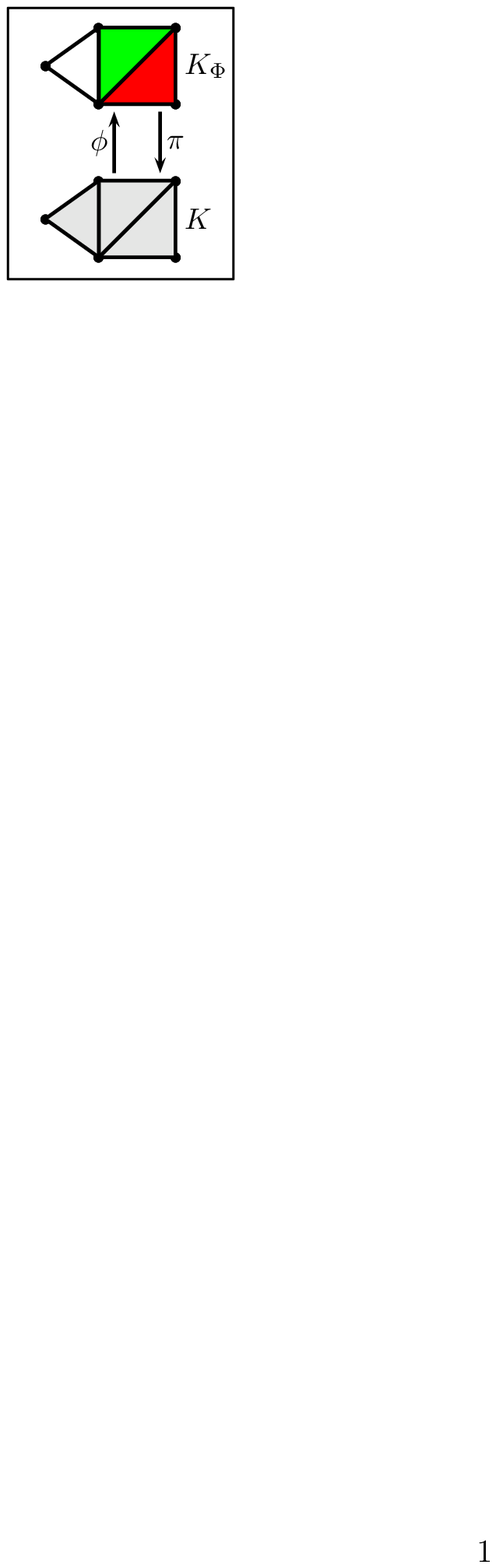}
			\label{subfig:desCW}}
	\end{subfigure}
	\begin{subfigure}[Local trivialization]
		{\includegraphics[scale=0.7,bb=120 610 350 670,clip]{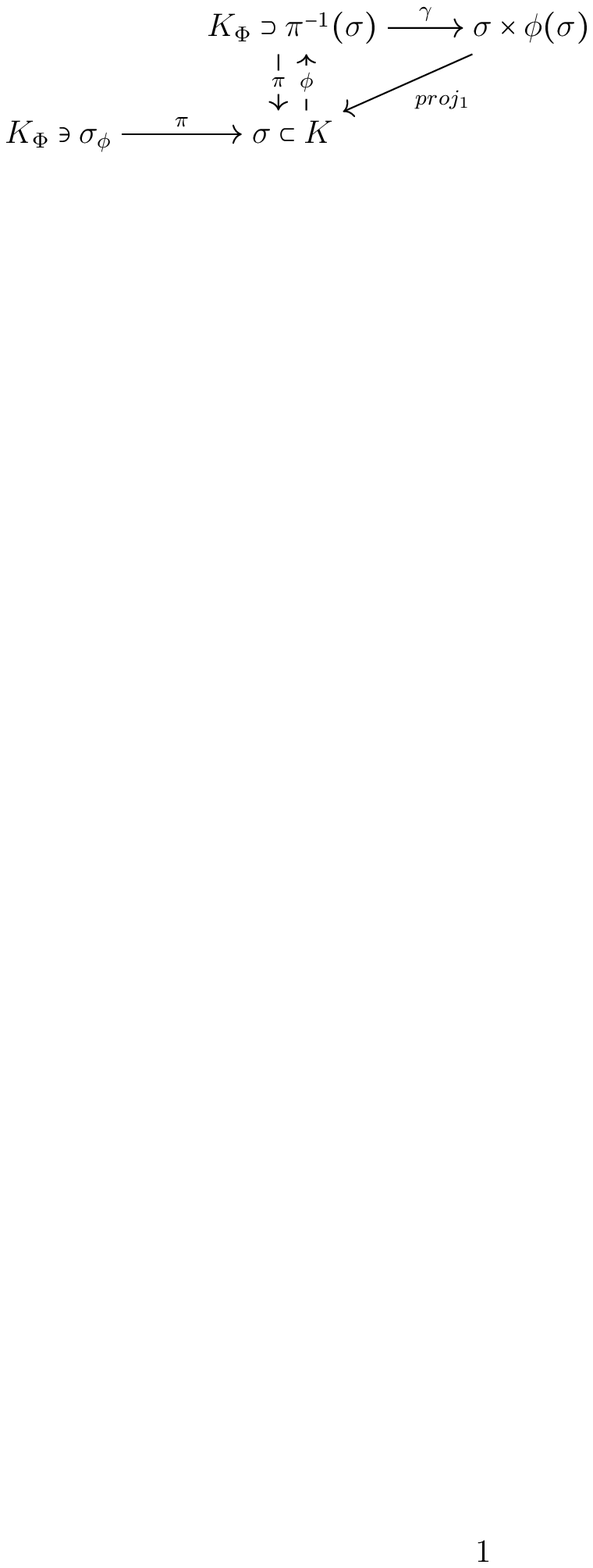}
		\label{subfig:triv}}
	\end{subfigure}
	\begin{subfigure}[Path-connectedness]
		{\includegraphics[scale=0.5,bb=125 560 220 690,clip]{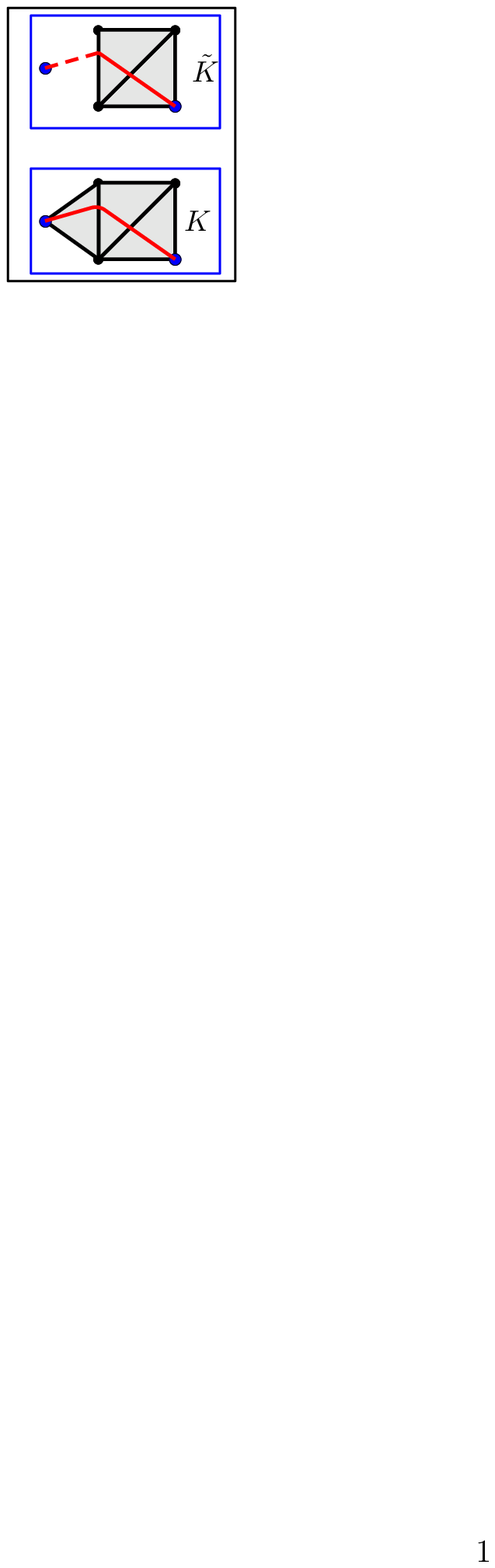}
		\label{subfig:pthcnctd}}
	\end{subfigure}
	\caption{Fig.~\ref{subfig:desCW} represents the descriptive CW complex as a fibre bundle $(K_\Phi,K,\pi,\phi(\sigma^n))$. The local trivialization property in this fibre bundle is illustrated as a commutative diagrams in Fig.~\ref{subfig:triv}. Fig.~\ref{subfig:pthcnctd} presents an example of a path-connected complex $K$, and a space $\tilde{K}$ which is not.}
	\label{fig:fig1}	
\end{figure}
A \emph{region based probe function} $\phi:2^K \rightarrow \mathbb{R}^n$ maps each set to its description. Then a \emph{descriptive cell complex} $K_\Phi$ can be defined as a fibre bundle, $(K_\Phi,K,\pi,\phi(\sigma^n))$, where $K$ is a cell complex and $\sigma^n \subset K$ is a simplex in $K$. A fibre bundle is a generalization of product topology and satisfies \emph{local trivialization condition},  stating that in a small neighborhood $\pi^{-1}(U) \subset E$ is homeomorphic to $U \times F$ via a map $\gamma$ as shown in Fig.~\ref{subfig:triv}. $U \subset B$, and the map $\pi^{-1}$ is called the \emph{section}. For $(K_\Phi,K,\pi,\phi(U))$ the region based probe $\phi$ is the \emph{section}, as it is homeomorphic to $\pi^{-1}$ for $U \subset B$. In topology a simplex with an empty interior is a \emph{hole}. We extend this notion to \emph{descriptive hole}, which is a region of constant description. Thus a descriptive hole is equivalent to the traditional definition of hole if the description is considered to be $\emptyset$.

\begin{example}
	Let us examine the notion of a descriptive CW as a fibre bundle, $(K_\Phi,K,\pi,\phi(\sigma^n))$. For this we consider Fig.~\ref{subfig:desCW}. Here $K$ is the base space and $\sigma^n \in K$ is a $n$-simplex. Let us consider a function $\phi:2^K \rightarrow \mathbb{R}^n$ which assigns to each $\sigma^n \in K$ a color. This process gives us the $K_\Phi$, in which there is a red, green and an empty(having no interior,$\phi(\sigma^2)=\emptyset$) triangle. Contrast this to $K$ in which every triangle has the same description. All the triangles in $K_\Phi$ is a descriptive hole as it has a constant description, while the triangle with $\phi(\triangle)=\emptyset$ is the hole in the traditional sense(having empty interior).
	
	We briefly explain the local trivialization of the fibre bundle, captured as a commutative diagram in Fig.~\ref{subfig:triv}. It shows that the $K_\Phi$ is a product space in a small neighborhood of $K$. The preimage of $\sigma \subset K$ under the map $\pi$ is homomorphic(via continuous map $\gamma$) to $\sigma \times \phi(\sigma)$, where $\phi$ is the section of this fibre bundle. Flexibility inherent in this structure allows us to account for the changing description(or color in this case) as we traverse the different $\sigma \in K$.  $\qquad \blacksquare$
\end{example}

\begin{figure}
	\centering
	\includegraphics[scale=0.7,bb=117 550 360 680,clip]{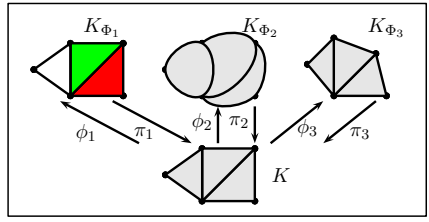}
	\caption{This figure illustrates how we can construct different descriptive CW complexes on the same base space i.e. $(K_{\Phi_i},K,\pi_i,\phi_i(U))$ for $i=1,2,3$. The function $\phi_1$ represents the hue, $\phi_2$ represents the curvature and $\phi_3$ represents area of the triangles.}
	\label{fig:multides}
\end{figure}

The descriptive CW complex can serve as a tool for abstraction to focus on a particular aspect of the underlying topological space. Using different probe functions we can construct distinct descriptive CW complexes atop the same base space. Let us consider an example to demonstrate this.
\begin{example}
	 Fig.~\ref{fig:multides} demonstrates the idea that we can construct different descriptive CW complexes on the same base space. Depending on the application the base space under analysis can have many features which are of interest. We can thus incorporate them on the same base space by the means of different fiber bundle structures by varying the projection $\pi$ and the corresponding section $\phi$. In this figure we have three different descriptive complexes $K_{\Phi_i}$ for $i=1,2,3$. The function $\phi_1$ attaches to each $\sigma^n \in K$ a value of hue or color. Similarly, $\phi_2$ attaches a value of curvature and $\phi_3$ the area. If we are looking at each individually we ignore the rest and focus on that particular property.
\end{example} 

Recall that hyperconnectedness encodes how collections of path-connected sub-complexes in a space are related(near or far) to each other. For a complete list of the axioms we refer the reader to \cite[Sec.~2]{ahmad2018maximal}. $\hnear{n}(A_1,\cdots,A_n)=0$ if the sets are \emph{near} and $\hnear{n}(A_1,\cdots,A_n)=1$ if \emph{far}, where $A_i \subset X$ for $i \in \mathbb{Z}^+$. A family of sets $A_i$ is Lodato hyperconnected, $\hnear{n}(A_1,\cdots,A_n)=0$, if they share a non-empty intersection \cite[axiom \textbf{(hP3)}]{ahmad2018maximal}. Strong hyper-connectedness, $\hsn{n}(A_1,\cdots,A_n)=0$, implies that interiors of the sets share non-empty intersection \cite[axiom \textbf{(snhN5)}]{ahmad2018maximal}. Descriptive hyper-connectedness $\hdnear{n}(A_1,\cdots,A_n)=0$, requires the sets to have a non-empty descriptive intersection i.e. they share a common description \cite[axiom \textbf{(dhP3)}]{ahmad2018maximal}.

\begin{example}
	We consider Fig.~\ref{subfig:pthcnctd}, to illustrate the concept of path-connectedness. If in a space there exists a path, in the space, between all the constituent points it is called path-connected. It is obvious that $K$ is a convex space, hence by definition path-connected. Whereas, in $\tilde{K}$ there exists no path within this space between the blue point and any of the other points. $\qquad \blacksquare$
\end{example}
 
Hyper-connectedness yields structures in a CW complex. \emph{Nerve}, $\Nrv$, is a collection of hyper-connected sets, with the number of sets befing the \emph{order}, $|\Nrv|$. Nerve with the maximal order is the \emph{maximal nuclear cluster}(MNC), with the common intersection as the \emph{nucleus}, $d$. Each of the sets in a MNC are the \emph{spokes}. A \emph{spoke complex} of order $n$, $skcx_n$, is a generalization of MNC. $skcx_k(d)$ is defined recursivly as the collection of sets hyper-connected to $skcx_{k-1}$, but far from $skcx_{k-2}$, with $skcx_0=d$. A cycle with the centroids of $skcx_k(d)$ as vertices is the \emph{$k^{th}$ maximal centroidal cycle}, $mcyc_k(d)$. The collection of $mcyc_i$ for $i \leq k$, is the \emph{$k^{th}$ maximal centroidal vortex}, $mvort_k(d)$. Each of these structures are related to their descritive counterparts via a fibre bundle in the similar fashion to the parent CW complex $K$.

\section{Main Results}\label{sec:mainres}
This section is divided into two subsections.
\subsection{Hyperconnected CW complex}\label{subsec:hypcw}
~\\
We can define a CW complex in terms of hyperconnectedness, beginnig with
\begin{lemma}
	Let $X$ be a space and $A\subset X$, then
	\begin{align*}
	clA=\{q\in X:\hnear{2}(A,q)=0\}
	\end{align*}
\end{lemma}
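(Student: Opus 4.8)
The plan is to prove the set equality by establishing the two inclusions separately, using the ball-based description of $\cl A$ from Section~\ref{sec:prelim} together with the hyperconnectedness axioms of \cite[Sec.~2]{ahmad2018maximal}. The underlying observation is that for $n=2$ the relation $\hnear{2}$ restricts to an ordinary Lodato proximity $\near$ between a set and a singleton, so $\hnear{2}(A,q)=0$ is precisely the statement $\{q\}\,\near\,A$; the lemma is then the familiar characterization of topological closure by a compatible Lodato proximity, transported to the present relator setting.

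For the inclusion $\cl A\subseteq\{q\in X:\hnear{2}(A,q)=0\}$, I would take $q\in\cl A$, read the closure condition in its standard form that every ball $B_r(q)$ meets $A$, and split into cases. If $q\in A$, nearness is immediate from axiom \textbf{(hP3)} (nonempty intersection implies nearness). If $q\in\cl A\setminus A$, then for each $r>0$ choose $a_r\in B_r(q)\cap A$; since $q$ lies in the closure of each singleton $\{a_r\}$ and each $\{a_r\}$ meets $A$, the closure-compatibility axiom of hyperconnectedness (the analogue of the Lodato axiom under which nearness is preserved along closures) yields $\{q\}\,\near\,A$, i.e. $\hnear{2}(A,q)=0$.

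For the reverse inclusion I would argue by contraposition. If $q\notin\cl A$, the ball description gives some $r>0$ with $B_r(q)\cap A=\emptyset$, so $A\subseteq X\setminus\cl B_{r/2}(q)$ and $\{q\}\subseteq B_{r/2}(q)$ exhibit disjoint open neighbourhoods of the two sets; the hyperconnectedness axiom asserting that sets possessing disjoint neighbourhoods are far then forces $\hnear{2}(A,q)=1$, i.e. $q\notin\{q\in X:\hnear{2}(A,q)=0\}$. Combining the two inclusions gives the claimed identity.

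I expect the main obstacle to be pinning down which axiom of \cite{ahmad2018maximal} carries the weight in the first inclusion: the passage from ``$q$ lies in the closure of points of $A$'' to ``$\{q\}$ is near $A$'' is the hyperconnected version of the Lodato closure-compatibility axiom, and it is the only genuinely non-formal step. One should also take a little care to interpret the ball formula for $\cl A$ in the standard sense ``every neighbourhood of $q$ meets $A$'' rather than literally, after which both inclusions are routine.
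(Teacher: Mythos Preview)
Your proposal takes a markedly different route from the paper. The paper's proof is a two-line reduction: it observes, by comparing axioms \textbf{(hP3)} and \textbf{(P3)} of \cite{ahmad2018maximal}, that $\hnear{2}(A,q)=0$ is equivalent to the ordinary Lodato proximity statement $A\near q$, and then simply cites \v{C}ech~\cite[\S 2.5, p.~439]{Cech1966} for the classical identity $\cl A=\{q:A\near q\}$. No inclusions are chased; the work is outsourced to the literature once the reduction to the binary relation is made. You recognise this same reduction in your opening paragraph, so you could have stopped there and invoked \v{C}ech as the paper does.

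Your direct argument, by contrast, tries to reprove the \v{C}ech result from scratch, and there is a genuine gap in the first inclusion. From $a_r\in B_r(q)\cap A$ you assert that ``$q$ lies in the closure of each singleton $\{a_r\}$'', but this is false in any $T_1$ space: $\cl\{a_r\}=\{a_r\}$, and there is no reason $q=a_r$. What you actually have is that $q\in\cl\{a_r:r>0\}$, i.e.\ $q$ is in the closure of the \emph{set} of chosen points, not of the individual singletons; the Lodato closure-compatibility axiom you want to apply (if $A\near B$ and every $b\in B$ satisfies $\{b\}\near C$ then $A\near C$) does not fire in the form you have set it up. The step can be repaired---for instance by using directly that $\{q\}\near\{a_r:r>0\}$ together with $\{a_r:r>0\}\subseteq A$ and monotonicity---but as written the argument does not go through. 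Your contrapositive for the reverse inclusion is fine in spirit, though it presupposes a separation-type axiom (far sets admit disjoint neighbourhoods, or its converse) that you would need to locate explicitly in \cite{ahmad2018maximal}.
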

\begin{proof}
	We know from comparing \cite[axiom \textbf{(hP3)}]{ahmad2018maximal} and \cite[axiom \textbf{(P3)}]{ahmad2018maximal}, that $\hnear{2}(A,q)=0 \Leftrightarrow  A \near q$. Thus the equation presented in theorem is equivalent to $clA=\{q\in X:A \near q\}$ which has been established in \cite[\S 2.5, p.~439]{Cech1966}. 
\end{proof}
Now, we define the Hausdorff($T_2$) property of a space in terms of Lodato hyperconnectedness in a proximal relator space.
\begin{lemma}\label{lem:hausdorff}
	Let $(K,\hnear{k})$ be a relator space, and $x,y \in K$ be any two points in it. If there exist sets $X,Y \in K$ containing $x$ and $y$ respectivly, such that $\hnear{2}(X,Y)=1$, then $K$ is a Hausdorff($T_2$) space.
\end{lemma}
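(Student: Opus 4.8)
The plan is to turn the single ``far'' pair $(X,Y)$ into a pair of disjoint open sets, one around $x$ and one around $y$; since $x$ and $y$ are arbitrary, this is exactly the $T_2$ condition.

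First I would rewrite the hypothesis in terms of the relator's binary relation $\hnear{2}$, regarded as a (Lodato) proximity. Exactly as in the proof of Lemma~1, comparing \cite[axiom \textbf{(hP3)}]{ahmad2018maximal} with \cite[axiom \textbf{(P3)}]{ahmad2018maximal} gives $\hnear{2}(A,B)=0 \Leftrightarrow A \near B$, so the hypothesis $\hnear{2}(X,Y)=1$ says precisely that $X$ is far from $Y$, i.e.\ $X \mathbin{\not\near} Y$. Since $\{x\}\subseteq X$ and $\{y\}\subseteq Y$, monotonicity of a proximity forces $\{x\}\mathbin{\not\near} Y$ and $X \mathbin{\not\near}\{y\}$, i.e.\ $x\notin\cl Y$ and $y\notin\cl X$, with $\cl$ as characterised in Lemma~1. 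This is already a $T_1$-flavoured separation, but it does not by itself give \emph{disjoint} neighbourhoods, so the real work is in the next step.

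Second, I would invoke the fact that $K$ is a planar cell complex sitting in the Euclidean plane $X$, which lets one upgrade ``far'' to a positive metric gap $r \assign \inf\{\,|p-q| : p\in X,\ q\in Y\,\}>0$. Put $U = \bigcup_{p\in X} B_{r/2}(p)$ and $V = \bigcup_{q\in Y} B_{r/2}(q)$, unions of the open balls $B_{r/2}(\cdot)$ of Section~\ref{sec:prelim}. Then $U$ and $V$ are open, $x\in X\subseteq U$ and $y\in Y\subseteq V$, and $U\cap V=\emptyset$: a common point $z$ would yield $p\in X$ and $q\in Y$ with $|z-p|<r/2$ and $|z-q|<r/2$, hence $|p-q|<r$, contradicting the choice of $r$. (Staying instead inside the axiomatics of the relator, an Efremovi\v{c}-type separation axiom applied to $X \mathbin{\not\near} Y$ produces a set $C$ with $X \mathbin{\not\near} C$ and $Y \mathbin{\not\near} (K\setminus C)$, and then $K\setminus\cl C \ni x$ and $\Int C \ni y$ are the required disjoint open sets.) Since such a far pair exists for every choice of distinct $x,y\in K$, every two points are separated by disjoint open sets, so $K$ is Hausdorff.

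The main obstacle is exactly the gap flagged above: a bare Lodato proximity separates far sets only in the weak sense $x\notin\cl Y$, $y\notin\cl X$ (for the cofinite proximity on an infinite set this weak separation holds for all pairs, yet the space is not $T_2$), so $T_1$-style separation must not be mistaken for the $T_2$ conclusion. Closing the gap is where one has to commit to the extra structure actually available --- here the Euclidean metric on the plane, or equivalently an EF/strong-nearness axiom in the relator --- and any correct argument must make that commitment explicit.
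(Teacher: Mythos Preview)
Your argument is considerably more elaborate than the paper's own proof, and the extra work is not wasted.  The paper's proof is essentially two lines: from axiom \textbf{(hP3)} it reads off $\hnear{2}(X,Y)=1 \Leftrightarrow X\cap Y=\emptyset$, identifies the existence of such a disjoint pair $(X,Y)$ for each choice of $x,y$ with the Hausdorff condition, and stops.  It never mentions openness of the separating sets, never passes through a $T_1$-style intermediate step, and never appeals to the ambient Euclidean metric or to an Efremovi\v{c}-type axiom.

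What you do differently is precisely what you flag in your final paragraph: you insist on manufacturing genuinely \emph{open} disjoint neighbourhoods of $x$ and $y$, and you correctly observe that a bare Lodato relation only gives $x\notin\cl Y$, $y\notin\cl X$, which is $T_1$-flavoured but not $T_2$.  Your metric-ball construction with radius $r/2$ (or the alternative EF separation producing $K\setminus\cl C$ and $\Int C$) is what actually closes that gap, at the cost of importing structure beyond the hypotheses as literally stated.  So the two approaches trade off: the paper's buys brevity by tacitly conflating ``disjoint sets containing $x,y$'' with ``disjoint open neighbourhoods of $x,y$''; yours buys rigour by making explicit the additional input (metric or EF axiom) that a genuine $T_2$ conclusion requires.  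Your diagnosis of the obstacle is accurate and is, in fact, a point the paper's proof glosses over.
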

\begin{proof}
	A Hausdorff($T_2$) space is such that any two points ($x,y$) are contained in subsets($X,Y$) with a non-empty intersection. From \cite[axiom \textbf{(hP3)}]{ahmad2018maximal} two non-empty sets are Lodato hyperconnected if they have a non-empty intersection. Thus if two non-empty sets have an empty intersection, they are far and vice versa i.e. $\hnear{2}(X,Y)=1 \Leftrightarrow X \cap Y = \emptyset$. Hence proven. 
\end{proof}
The Hausdorff space as defined in Lemma~\ref{lem:hausdorff} can then be used as a basis for CW complex.
\begin{theorem}
	Let $(K,\hnear{k},\hsn{k})$ be a relator space satisfying the Hausdorff property presented in Lemma~\ref{lem:hausdorff}. If $K$ has a decomposition and satisfies:
	\begin{compactenum}[$1^o$]
		\item \textbf{Closure finiteness:} $\hsn{j+1}(cl A,\sigma^m_1,\cdots,\sigma^m_j)=0$, where $A= \sigma^n \in K$, $\sigma^m_i \in K$, $m,j$ are arbitrary finite nonnegative integers.
		\item \textbf{Weak topology:} $A \subset K$ is closed, provided $A \cap b$ is closed for all $b \in B$, where $B=\{cl \sigma^n: \sigma^n \in K\,and \, \hsn{2}(A,cl \sigma^n)=0\}$.  
		\end{compactenum}
	then $K$ is a CW complex.
\end{theorem}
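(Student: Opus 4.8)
\emph{Proof strategy.} The plan is to verify, one condition at a time, that the three hyperconnectedness hypotheses force $K$ to meet the classical defining conditions of a CW complex recorded in Section~\ref{sec:prelim}: the Hausdorff ($T_2$) axiom, closure finiteness, and the weak topology. The Hausdorff part is already supplied: by assumption $K$ satisfies the property of Lemma~\ref{lem:hausdorff}, and the proof of that lemma shows $\hnear{2}(X,Y)=1 \Leftrightarrow X\cap Y=\emptyset$, so any two points of $K$ sit in disjoint members $X,Y$ of the relator, which is exactly the separation formulation of $T_2$. It therefore remains to translate hypotheses $1^o$ and $2^o$ into conditions $1^o$ and $2^o$ of the classical definition.

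For closure finiteness, I would start from the cited strong-hyperconnectedness axiom \cite[axiom \textbf{(snhN5)}]{ahmad2018maximal}, by which $\hsn{j+1}(\cl A,\sigma^m_1,\dots,\sigma^m_j)=0$ holds precisely when the interiors $\Int \cl A,\Int \sigma^m_1,\dots,\Int \sigma^m_j$ have a common point, i.e. when $\cl A$ genuinely overlaps each of the listed cells. Reading hypothesis $1^o$ as the assertion that the family $\{\sigma^m_1,\dots,\sigma^m_j\}$ of cells so related to $\cl A$ is carried by a single finite index $j$, we conclude that $\cl A=\cl\sigma^n$ overlaps only finitely many cells of $K$, which is condition $1^o$ of the classical definition.

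For the weak topology, note that $B=\{\cl\sigma^n:\sigma^n\in K,\ \hsn{2}(A,\cl\sigma^n)=0\}$ is, again by \cite[axiom \textbf{(snhN5)}]{ahmad2018maximal}, exactly the collection of closed cells whose interior meets $\Int A$. I would then argue that for any cell $\sigma^n$ with $\hsn{2}(A,\cl\sigma^n)=1$ — that is, $\cl\sigma^n\notin B$ — the set $A\cap\cl\sigma^n$ is confined to the boundary $\partial\sigma^n$, a union of cells of strictly smaller dimension; descending on the skeleton dimension and using the closure finiteness just obtained, $A\cap\cl\sigma^n$ is a finite union of closed sets and hence closed. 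Consequently the requirement ``$A\cap b$ closed for every $b\in B$'' already entails ``$A\cap\cl\sigma^n$ closed for every $\sigma^n\in K$'', so $A$ is closed in the weak topology; the reverse implication is trivial since $B$ is a subfamily. Having matched all three conditions, $K$ is a CW complex.

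The main obstacle I anticipate is the weak-topology step: one must check carefully that $\hsn{2}(A,\cl\sigma^n)=1$ really does push $A\cap\cl\sigma^n$ into $\partial\sigma^n$ — this uses that strong farness means disjoint \emph{interiors}, not merely disjoint sets in the weaker Lodato sense of Lemma~\ref{lem:hausdorff} — and the downward induction on skeleta has to be arranged so that closure finiteness is available at each stage. A secondary point to keep honest is that the equivalence ``$\hsn{j+1}=0 \Leftrightarrow$ common interior point'' from \cite[axiom \textbf{(snhN5)}]{ahmad2018maximal} is applied to the correct interiors ($\Int\cl A$ versus $\Int A$, and $\Int\sigma^m_i$ versus $\Int\cl\sigma^m_i$), since the classical closure-finiteness condition concerns the closed cell $\cl\sigma^n$ meeting open cells.
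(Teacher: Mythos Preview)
Your overall architecture---reduce to the three classical CW axioms and verify each via a hyperconnectedness axiom---matches the paper's. The substantive divergence is \emph{which} axiom you invoke. You reach for \cite[axiom \textbf{(snhN5)}]{ahmad2018maximal} (strong hyperconnectedness forces a common \emph{interior} point); the paper instead uses \cite[axiom \textbf{(snhN3)}]{ahmad2018maximal}, namely
\[
\hsn{k}(A_1,\dots,A_k)=0 \ \Rightarrow\ \bigcap_{i=1}^{k} A_i \neq \emptyset,
\]
which speaks only of the sets themselves. This single choice accounts for everything that is harder in your write-up.

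For closure finiteness the difference is cosmetic: with \textbf{(snhN3)} one reads $1^o$ directly as ``$\cl A$ meets a finite list $\sigma^m_1,\dots,\sigma^m_j$ of cells,'' no interiors involved, and concludes. For the weak topology, however, the difference is material. With \textbf{(snhN3)} the set $B$ is (the paper argues) exactly the family of closed cells $\cl\sigma^n$ having $A\cap\cl\sigma^n\neq\emptyset$; since an empty intersection is trivially closed, requiring ``$A\cap b$ closed for all $b\in B$'' is immediately the same as ``$A\cap\cl\sigma^n$ closed for all $\sigma^n\in K$,'' and $2^o$ matches the classical weak-topology axiom in one line. No boundary analysis, no skeleton induction.

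Your route through \textbf{(snhN5)} forces you to deal with cells whose \emph{interiors} miss $\Int A$ but which may still meet $A$, and that is where the gap you anticipate is real: from $\Int A\cap\Int(\cl\sigma^n)=\emptyset$ you cannot conclude $A\cap\cl\sigma^n\subset\partial\sigma^n$, because points of $\partial A$ may lie in the open cell $\sigma^n$. The downward induction on skeleta then never gets started. Switching to \textbf{(snhN3)} removes the issue entirely and collapses the argument to the paper's short verification.
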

\begin{proof}
	We require the two different relations, as Lodato $\hnear{k}$ is required to define the closure and strong hyperconnectedness $\hsn{k}$ to express the intersection properties of the CW complex. We will require \cite[axiom~\textbf{(snhN3)}]{ahmad2018maximal}, which states
	\begin{align*}
	\hsn{k}(A_1,\cdots,A_k)=0 \Rightarrow \mathop \bigcap \limits_{i=1,\cdots,k} A_i \neq \emptyset,
	\end{align*}  Let us proceed to prove the equivalence to the conditions of the CW complex.
	\begin{compactenum}[$1^o$]
		\item It can be seen that $A$	and $\sigma_i^m$ are both simplices(cells) in the space $K$. We can establish from \cite[axiom~\textbf{(snhN3)}]{ahmad2018maximal}
		that $\mathop \bigcap (clA,\sigma_1^m,\cdots,\sigma_j^m) \neq \emptyset$. Moreover, as $i$ is a finite nonnegative integer, it is either $0$ or a finite positive number. Thus, this statement states that the $clA$ can have a nonempty intersection with only finitly many cells other than $A$. Which is eqivalent to \textbf{Closure Finiteness} as stated in Sec.~\ref{sec:prelim}.
		
		\item From \cite[axiom~\textbf{(snhN3)}]{ahmad2018maximal} we can establish that $\hsn{2}(A,cl \sigma^n) \Rightarrow A \cap \sigma^n$. Thus $B$ is the set of closures of all the $\sigma^n \in K$ that have a nonempty intersection with $A$. Thus the condition states that $A$ is closed if all its nonempty intersections with other cells in $K$ are closed. This is equivalent to \textbf{Weak topology} as started in Sec.~\ref{sec:prelim}. 
	\end{compactenum}
Thus, we have established the equivalence of conditions as stated in this theorem with those listed in Sec.~\ref{sec:prelim}. Hence proved.
\end{proof}
\subsection{Hyperconnected genralization of path}\label{subsec:hyppath}
~\\
In the discussion that follows we attempt to formulate the notion of a path in terms of hyperconnectedness. Similar to proximity \cite{peters2016strongly} these relations can be used to define continuity. 
\begin{definition}\label{def:hypercnt}
	Let $(X,\hsn{k}),(Y,\hsn{k})$ be two relator spaces, $A_1,\cdots,A_n \in 2^X$ and  a function $f:X \rightarrow Y$. Then if,
	\begin{align*}
	\hsn{n}(A_1,\cdots,A_n)=0 \Rightarrow \hsn{n}(f(A_1),\cdots,f(A_n))=0
	\end{align*}
	the function $f$ is continuous in $\hsn{n}$ sense or $\hsn{n}-$continuous.
\end{definition}
Another associated notion with strong proximal continuity is that of strong proximal equivalence as defined in \cite[def.~3.1]{peters2016strongly}.
\begin{definition}\label{def:hypereqv}
	Let $(X,\hsn{k}),(Y,\hsn{k})$ be two relator spaces, $A_1,\cdots,A_n \in 2^X$ and  a function $f:X \rightarrow Y$. Then if,
	\begin{align*}
	\hsn{n}(A_1,\cdots,A_n)=0 \Leftrightarrow \hsn{n}(f(A_1),\cdots,f(A_n))=0
	\end{align*}
	the function $f$ is an equivalence in $\hsn{n}$ sense or $\hsn{n}-$equivalence.
\end{definition}
Here, we state a result regarding $\hsn{k}-$equivalence being a stronger condition than $\hsn{k}-$continuity.
\begin{theorem}
	Let $(X,\hsn{k}),(Y,\hsn{k})$ be two relator spaces and  a function $f:X \rightarrow Y$. Then, 
	\begin{align*}
		f \text{ is } \hsn{k}-\text{equivalence} \Rightarrow f \text{ is } \hsn{k}-\text{continuous}.
	\end{align*}
\end{theorem}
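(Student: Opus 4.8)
The plan is to unwind the definitions and observe that the implication is essentially immediate from the logical structure of Definitions~\ref{def:hypercnt} and~\ref{def:hypereqv}. First I would fix an arbitrary finite tuple $A_1,\dots,A_k \in 2^X$ and assume $f$ is an $\hsn{k}$-equivalence; by Definition~\ref{def:hypereqv} this gives the biconditional $\hsn{k}(A_1,\dots,A_k)=0 \Leftrightarrow \hsn{k}(f(A_1),\dots,f(A_k))=0$. Then I would simply retain the forward direction of that biconditional, namely $\hsn{k}(A_1,\dots,A_k)=0 \Rightarrow \hsn{k}(f(A_1),\dots,f(A_k))=0$, which is exactly the condition in Definition~\ref{def:hypercnt} for $f$ to be $\hsn{k}$-continuous. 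Since the tuple was arbitrary, $f$ is $\hsn{k}$-continuous.

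The key steps, in order, are: (i) state what $\hsn{k}$-equivalence means for an arbitrary tuple, invoking Definition~\ref{def:hypereqv}; (ii) recall the elementary fact that a biconditional $P \Leftrightarrow Q$ entails the conditional $P \Rightarrow Q$; (iii) match $P \Rightarrow Q$ against Definition~\ref{def:hypercnt} to conclude $\hsn{k}$-continuity. It is worth remarking, as a sanity check on why the converse fails and hence why equivalence is genuinely stronger, that $\hsn{k}$-continuity only controls the image of strongly hyperconnected families and says nothing about families whose images become strongly hyperconnected without the preimages being so; the missing reverse implication $\hsn{k}(f(A_1),\dots,f(A_k))=0 \Rightarrow \hsn{k}(A_1,\dots,A_k)=0$ is precisely the extra content carried by equivalence.

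I do not expect any real obstacle here: there are no proximity axioms to invoke and no relator-space machinery beyond the two definitions themselves, so the proof is a one-line logical deduction. If anything, the only point requiring a word of care is that the quantifier over the tuples $A_1,\dots,A_k$ is the same on both sides — the index $k$ and the sets $A_i$ range identically in the two definitions — so dropping one direction of the biconditional leaves the continuity condition intact verbatim, with no weakening or re-indexing needed. One could optionally note that the same argument shows $\hsn{k}$-equivalence also implies that $f$ reflects strong hyperconnectedness, but that is not needed for the stated theorem.
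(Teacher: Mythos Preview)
Your proposal is correct and matches the paper's own proof essentially verbatim: the paper simply invokes Definition~\ref{def:hypereqv} to extract the forward implication $\hsn{k}(A_1,\dots,A_n)=0 \Rightarrow \hsn{k}(f(A_1),\dots,f(A_n))=0$ and identifies this with Definition~\ref{def:hypercnt}. Your additional remarks on quantifier matching and the failure of the converse are sound but go slightly beyond what the paper records.
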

\begin{proof}
	Def.~\ref{def:hypereqv} implies that if $f$ is an $\hsn{k}-$equivalence, then $\hsn{k}(A_1,\cdots,A_n)=0 \Rightarrow \hsn{k}(f(A_1),\cdots,f(A_n))=0$. This, is the definition of $\hsn{k}-$continuity as per Def.~\ref{def:hypercnt}. Hence, proved. 
\end{proof}
\begin{remark}
We must point out an important distinction between $\hsn{k}-$continuous function and $\hsn{k}-$equivalnce. It is to be noted that by definition $\hsn{k}-$continuity of a map preserves only the nearness of sets i.e. it ensures the hyperconnected sets map to hyperconected sets. It does not ensure that two non-hyperconnected sets remain so under the map. $\hsn{k}-$equivalence is a stronger relation and it also ensures that the non-hyperconnected sets remain so under the map. Thus, $\hsn{k}-$equivalence has the same role in proximity spaces as the homeomorphism has in topological spaces. $\qquad \blacksquare$
\end{remark}

\begin{figure}
	\centering
	\includegraphics[scale=0.5,bb=120 450 320 670,clip]{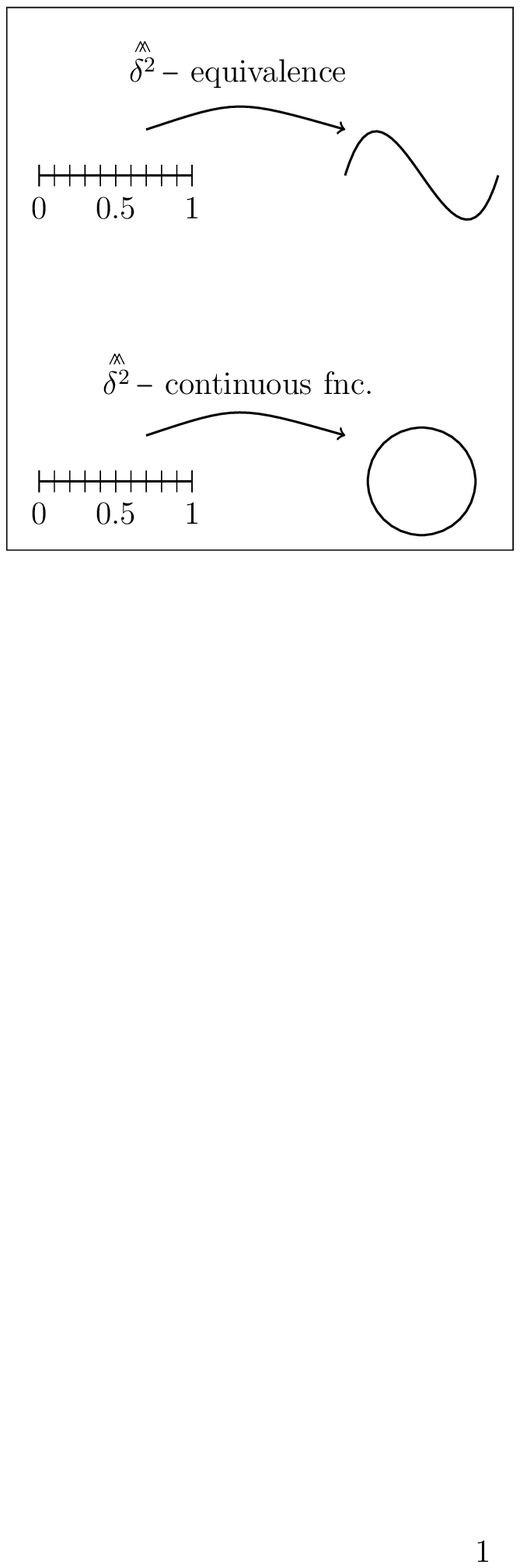}
	\caption{This figure illustrates that a $\hsn{2}-$ continuity preserves only the nearness of sets i.e. $\hsn{2}(A,B)=0 \Rightarrow \hsn{2}(f(A),f(B)=0$. This means that it allows gluing i.e. joining sets that are far, as shown by gluing the ends of a line interval to make a circle. Moreover, it illustrates that $\hsn{2}-$equivalence also preserves the fact that two sets are far. This is done by imposing the additional condition that $\hsn{2}(f(A),f(B)=0 \Rightarrow \hsn{2}(A,B)=0 $ This only allows for deformations that neither glue nor tear the space. }
	\label{fig:example4}
\end{figure}

We demonstrate the remark above by an example.
\begin{example}\label{ex:exmploop}
	Let us consider a map, $\pi$, from the line segment $[0,1]$ to a circle. For the purpose of this example we look at $\hsn{2}$ specifically. Consider the decomposition of $[0,1]$ as $\{A_i^{i+0.1}=[i-\eta,i+0.1-\eta]\}$ where $i=0,0.1,\cdots,0.9$ and $\eta$ is an arbitrarily small positive real number. It can be seen that $|i-j|=1 \Rightarrow \hsn{2}(A_i^{i+0.1},A_j^{j+0.1})=0$ and $|i-j|>1 \Rightarrow \hsn{2}(A_i^{i+0.1},A_j^{j+0.1})=1$. This is because the interiors of the adjacent sets intersect. It is obvious that the sets $\hsn{2}(A_0^{0.1},A_{0.9}^1)=1$, but the map $\pi$ glues $0$ to $1$. Thus, $\hsn{2}(\pi(A_0^{0.1}),\pi(A_{0.9}^1))=0$. The continuity of the line segment is not altered in anyother way except this gluing. Hence, the map $\pi$ in this case is $\hsn{2}-$coninuous as per Def.~\ref{def:hypercnt} but not $\hsn{2}-$equivalence as per Def.~\ref{def:hypereqv}. So, we can see that the notion of $\hsn{2}-$continuity being weaker than that of $\hsn{2}-$equivalence permits gluing. We can see this reflected in Fig.~\ref{fig:example4}.  $\qquad \blacksquare$  
\end{example}
Now let us formulate the idea of path connectedness in terms of hyperconnectedness relations. A \emph{path} between two points in a topological space, $x,y \in X$, is a continuous function $f:[0,1] \rightarrow X$ with $f(0)=x$ and $f(1)=y$. 
We define the notion of a $\hsn{2}-$ \emph{hyperconnected chain} in a relator space $(X,\hsn{k})$.
\begin{definition}\label{def:hyperchain}
	Let there be a family of sets $\{A_i\}_{i \in \mathbb{Z}^+} \subset X$, in a relator space $(X,\hsn{k})$ such that
	\begin{align*}
	 & |i-j| \leq 1  \Rightarrow  \hsn{2}(A_i,A_j)=0 \\
	 & |i-j|>1  \Rightarrow  \hsn{2}(A_i,A_j)=1.
	\end{align*}
	Then, $\{A_i\}_{i \in \mathbb{Z}^+}$ is a $\hsn{2}-$hyperconnected chain.
	 
\end{definition}
 We formulate the following lemma.
\begin{lemma}\label{lem:hyperchainembd}
	Let $\{A_i\}_{i \in \mathbb{Z}^+} \subset K$ be a $\hsn{2}-$ hyperconnected chain in a realtor space $(A,\hsn{k})$ and let $f:X \rightarrow Y$ be an $\hsn{2}-$equivalence. Then, $\{f(A_i)\}_{i\in \mathbb{Z}^+}$ is a $\hsn{2}-$ hyperconnected chain in the relator space $(Y,\hsn{k})$.
\end{lemma}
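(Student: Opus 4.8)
The plan is to unwind Definition~\ref{def:hyperchain} and let the biconditional in Definition~\ref{def:hypereqv} do all the work. I would fix an arbitrary pair of indices $i,j\in\mathbb{Z}^+$ and verify that the family $\{f(A_i)\}_{i\in\mathbb{Z}^+}$ in $(Y,\hsn{k})$ satisfies exactly the same dichotomy ($\hsn{2}$-value $0$ when $|i-j|\le 1$, value $1$ when $|i-j|>1$) that $\{A_i\}_{i\in\mathbb{Z}^+}$ satisfies in the source relator space.

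For the first clause, suppose $|i-j|\le 1$. Since $\{A_i\}$ is a $\hsn{2}$-hyperconnected chain we have $\hsn{2}(A_i,A_j)=0$, and applying the forward implication of $\hsn{2}$-equivalence (Definition~\ref{def:hypereqv}, which in particular gives $\hsn{2}$-continuity in the sense of Definition~\ref{def:hypercnt}) yields $\hsn{2}(f(A_i),f(A_j))=0$. For the second clause, suppose $|i-j|>1$, so that $\hsn{2}(A_i,A_j)=1$. Here I would invoke the reverse implication of Definition~\ref{def:hypereqv} in contrapositive form: from $\hsn{2}(f(A_i),f(A_j))=0\Rightarrow\hsn{2}(A_i,A_j)=0$ and $\hsn{2}(A_i,A_j)\neq 0$ we conclude $\hsn{2}(f(A_i),f(A_j))=1$. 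Since $i,j$ were arbitrary, both clauses of Definition~\ref{def:hyperchain} hold for $\{f(A_i)\}_{i\in\mathbb{Z}^+}$, so it is a $\hsn{2}$-hyperconnected chain in $(Y,\hsn{k})$.

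I do not expect a genuine obstacle here; the lemma is essentially an immediate consequence of the two definitions. The only points deserving a sentence of care are, first, that the indexing is transported verbatim — the map is applied term by term, so the combinatorial pattern ``$|i-j|\le 1$ versus $|i-j|>1$'' is untouched — and, second, that the ``far'' clause genuinely requires the two-way strength of $\hsn{2}$-equivalence rather than mere $\hsn{2}$-continuity: as the gluing phenomenon in Example~\ref{ex:exmploop} shows, a merely $\hsn{2}$-continuous $f$ could send two far sets to near sets and thereby destroy the chain structure, which is precisely why the hypothesis is stated with an equivalence.
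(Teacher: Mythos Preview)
Your proposal is correct and follows essentially the same approach as the paper: use the forward implication of Definition~\ref{def:hypereqv} to push the $|i-j|\le 1$ clause through $f$, and use the contrapositive of the reverse implication to preserve the $|i-j|>1$ clause. Your closing remark about why mere $\hsn{2}$-continuity would not suffice mirrors the paper's own observation and is a nice touch.
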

\begin{proof}
	Since $\{A_i\}_{i \in \mathbb{Z}^+}$ is $\hsn{2}-$hyperconnected chain, it follows from def.~\ref{def:hyperchain} that $|i-j|\leq 1 \Rightarrow \hsn{2}(A_i,A_j)=0$. As $f:X \rightarrow Y$ is an $\hsn{2}-$equivalence, def.~\ref{def:hypereqv} implies that $\hsn{2}(A_i,A_j) \Rightarrow \hsn{2}(f(A_i),f(A_j))=0$. Thus, $|i-j| \leq 1 \Rightarrow \hsn{2}(f(A_i),f(A_j))=0$. The function $f$ being an $\hsn{2}-$equivalence also dictates that $\hsn{2}(f(A),f(B)) \Rightarrow \hsn{2}(A,B)$. This means that if the images of two sets under $f$ are hyperconnected, their domains must also be hyperconnected. In other words if domains are not hyperconnected the images are also not. Using this and $|i-j|\leq 1 \Rightarrow \hsn{2}(A_i,A_j)=0$ from Def.~\ref{def:hyperchain}, we can conclude that $|i-j|>1 \Rightarrow \hsn{2}(f(A_i),f(A_j))=1$. Thus, from def.~\ref{def:hyperchain} $\{f(A_i)\}_{i \in \mathbb{Z}^+}$ is a $\hsn{2}-$hyperconnected chain as both the conditions have been established. 
\end{proof}
Let us consider the decomposition of $[0,1]$ given by the sets in $2^{[0,1]}$. \emph{Path} is homeomorphic to $[0,1]$, and $[0,1]$ can be expressed as $\hnear{2}-$hyperconnected chain $\{A_i\}_{i=1,\cdots,n} \subset [0,1]$, starting at $A_0=0$ and ending at $A_n=1$. In a topological space the path is the embedding of $[0,1]$. In the context of a hyperconnected relator space we consider the embedding of the hyperconnected chain,$\{A_i\}_{i \in \mathbb{Z}^+}$ in the relator space.
\begin{definition}\label{def:hyperpath}
	Let $(X,\hsn{k})$ be a relator space and $x,y \in X$ be two points in it. Then, a $\hsn{2}-$equivalence function $f:[0,1] \rightarrow X$ such that $f(0)=x,f(1)=y$ is a path from $x$ to $y$. 
\end{definition} 
We formulate the following theorem, which demonstrate that a path is equivalent to the existance of a $\hsn{2}-$hyperconnected chain.
\begin{theorem}\label{thm:pathequivchain}
	Let $(X,\hsn{k})$ be a relator space and $x,y \in X$ be two points. Then a path from $x$ to $y$ is equivalent to the existence of a $\hsn{2}-$hyperconnected chain in $X$,$\{B_i\}_{i=1, \cdots, n}$ such that $x \in B_1$ and $y \in B_n$.
\end{theorem}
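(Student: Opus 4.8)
The plan is to prove the two implications separately, since the statement is an equivalence ``path from $x$ to $y$ $\Longleftrightarrow$ existence of an $\hsn{2}$-hyperconnected chain $\{B_i\}_{i=1,\dots,n}$ with $x\in B_1$, $y\in B_n$''.

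First I would handle the forward direction. Suppose $f:[0,1]\to X$ is a path from $x$ to $y$ in the sense of Definition~\ref{def:hyperpath}, i.e.\ $f$ is an $\hsn{2}$-equivalence with $f(0)=x$ and $f(1)=y$. The key observation, already noted in the paragraph preceding Definition~\ref{def:hyperpath}, is that $[0,1]$ itself decomposes as an $\hsn{2}$-hyperconnected chain $\{A_i\}_{i=1,\dots,n}$ with $0\in A_1$ and $1\in A_n$ (concretely, one may take the overlapping sub-intervals $A_i=[\,(i-1)/n - \eta,\ i/n\,]$ as in Example~\ref{ex:exmploop}, whose adjacent interiors meet while non-adjacent ones are far). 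Applying Lemma~\ref{lem:hyperchainembd} to this chain and the $\hsn{2}$-equivalence $f$ yields that $\{f(A_i)\}_{i=1,\dots,n}$ is an $\hsn{2}$-hyperconnected chain in $X$. Setting $B_i = f(A_i)$, we have $x=f(0)\in f(A_1)=B_1$ and $y=f(1)\in f(A_n)=B_n$, which is exactly the required chain.

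For the converse, suppose we are given an $\hsn{2}$-hyperconnected chain $\{B_i\}_{i=1,\dots,n}$ in $X$ with $x\in B_1$ and $y\in B_n$. I would build the path $f:[0,1]\to X$ by first fixing the standard chain decomposition $\{A_i\}_{i=1,\dots,n}$ of $[0,1]$ described above, then defining $f$ so that it carries $A_i$ onto $B_i$, sends $0\mapsto x$, $1\mapsto y$, and is consistent on the overlaps $A_i\cap A_{i+1}$ (which are nonempty by the chain condition and \cite[axiom~\textbf{(snhN3)}]{ahmad2018maximal}, and map into $B_i\cap B_{i+1}$, also nonempty for the same reason). One then checks that this $f$ is an $\hsn{2}$-equivalence: the forward implication $\hsn{2}(A_i,A_j)=0 \Rightarrow \hsn{2}(f(A_i),f(A_j))=0$ follows because both chains have adjacent sets strongly hyperconnected; the reverse implication $\hsn{2}(f(A_i),f(A_j))=0 \Rightarrow \hsn{2}(A_i,A_j)=0$ follows because in the target chain non-adjacent $B_i,B_j$ are far, matching the structure on the domain side. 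Since $f(0)=x$ and $f(1)=y$, Definition~\ref{def:hyperpath} gives that $f$ is a path from $x$ to $y$.

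The main obstacle is the converse direction, specifically making the construction of $f$ genuinely well-defined and genuinely an $\hsn{2}$-equivalence rather than merely $\hsn{2}$-continuous. The subtlety is that an arbitrary index-respecting map taking $A_i$ to $B_i$ need not reflect farness: one must use the hypothesis that $\{B_i\}$ is a \emph{chain} (so that $|i-j|>1 \Rightarrow \hsn{2}(B_i,B_j)=1$) to ensure the reverse implication of Definition~\ref{def:hypereqv} holds, and one must verify that the overlap regions are matched compatibly so that $f$ is single-valued. I would address this by defining $f$ piecewise on each $A_i$, reconciling the definitions on $A_i\cap A_{i+1}$ using a chosen point of $B_i\cap B_{i+1}$, and then arguing the equivalence property at the level of the chain indices, which is where the two defining conditions of Definition~\ref{def:hyperchain} on $\{B_i\}$ do exactly the work needed.
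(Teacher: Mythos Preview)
Your forward direction is exactly the paper's argument: decompose $[0,1]$ into overlapping sub-intervals forming an $\hsn{2}$-hyperconnected chain, invoke Lemma~\ref{lem:hyperchainembd} for the $\hsn{2}$-equivalence $f$, and set $B_i=f(A_i)$, noting $0\in A_1$ and $1\in A_n$ so that $x\in B_1$ and $y\in B_n$. There is nothing to add on that half.

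For the converse you go beyond the paper: the paper's proof in fact establishes only the forward implication and then declares the theorem proved, so your attempt to build $f$ from a given chain $\{B_i\}$ is additional work not present there. That said, your sketch of the converse has a real gap. Definition~\ref{def:hypereqv} requires the biconditional
\[
\hsn{n}(C_1,\dots,C_n)=0 \iff \hsn{n}(f(C_1),\dots,f(C_n))=0
\]
to hold for \emph{all} families $C_1,\dots,C_n\in 2^{[0,1]}$, not merely for the particular decomposition pieces $A_i$. Checking the equivalence ``at the level of the chain indices'' therefore does not suffice: once $f$ is defined pointwise, an arbitrary pair of subsets of $[0,1]$ can straddle several $A_i$'s, and nothing in your construction controls whether their images have intersecting interiors in $X$. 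A second, related issue is that you have only set-level data (each $B_i$ is just a subset of $X$), yet you need a genuine point map $f:[0,1]\to X$; ``carrying $A_i$ onto $B_i$'' and ``reconciling on overlaps via a chosen point of $B_i\cap B_{i+1}$'' does not by itself determine where an individual $t\in A_i$ should go, nor why the resulting $f$ respects interior-intersection for subsets finer than the $A_i$. Without further hypotheses on the $B_i$ (e.g.\ that each is an embedded arc), the converse as stated cannot be completed along these lines.
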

\begin{proof}
	As from Def.~\ref{def:hyperpath}, the path from $x$ to $y$ is a $\hsn{2}-$equivalence, $f:[0,1] \rightarrow X$, such that $f(0)=x$ and $f(1)=y$. We can decompose the interval $[0,1]$ as a family of intervals $\{A_i^{i+\frac{1}{n}}=[i-\eta,i+\frac{1}{n}-\eta]\}_i$ for $i=0,\frac{1}{n},\frac{2}{n},\cdots,\frac{n-1}{n}$ and $\eta \in \mathbb{R}^+$ is arbitrarily small. As, we can see that $|i-j|\leq 1 \Rightarrow \hsn{2}(A_i^{i+\frac{1}{n}},A_j^{j+\frac{1}{n}})=0$ and $|i-j|> 1 \Rightarrow \hsn{2}(A_i^{i+\frac{1}{n}},A_j^{j+\frac{1}{n}})=1$ for this particular decomposition. This is because the interiors of adjacent intervals intersect. From Def.~\ref{def:hyperchain} it can be established that $\{A_i^{i+\frac{1}{n}}=[i,i+\frac{1}{n}]\}_i$ is a $\hsn{2}-$hyperconnected chain in $[0,1]$. As the path $f$ is an $\hsn{2}-$equivalence, thus using the lemma \ref{lem:hyperchainembd} we can conclude that there is a $\hsn{2}-$hyperconnected chain in the relator space $(Y,\hsn{k})$. The chain can be represented as $\{B_i\}_{i=1,\cdots,n}$ where $B_i=f(A_{(i-1)/n}^{i/n})$. Moreover, as we have established earlier in this proof that $f(0)=x$ and $0 \in A_0^{1/n}$, thus $x \in B_1$. Similarly $f(1)=y$ and $1 \in A_{(n-1)/n}^1$, thus $y \in B_n$. Hence proved.  
\end{proof}
Thus, we have established the equivalence of a path between two points in a space to the existence of hyperconnected chain such that the first and last sets in it, contain each of the points.

Let us now consider relaxed version of \emph{path}, than the one defined in Def.~\ref{def:hyperpath}. We start by considering a more relaxed version of $\hsn{2}-$hyperconnected chain defined in Def.~\ref{def:hyperchain}. We define the notion of a hyperconnected link in a relator space.
\begin{definition}\label{def:hyperlink}
	Let there be a family of sets $\{A_i\}_{i \in \mathbb{Z}^+} \subset X$, in a relator space $(X,\hsn{k})$ such that
	\begin{align*}
	& |i-j| \leq 1  \Rightarrow  \hsn{2}(A_i,A_j)=0 
	\end{align*}
	Then, $\{A_i\}_{i \in \mathbb{Z}^+}$ is a $\hsn{2}-$hyperconnected link.	
\end{definition}

\begin{figure}
	\centering
	\includegraphics[scale=0.5,bb=120 450 500 670,clip]{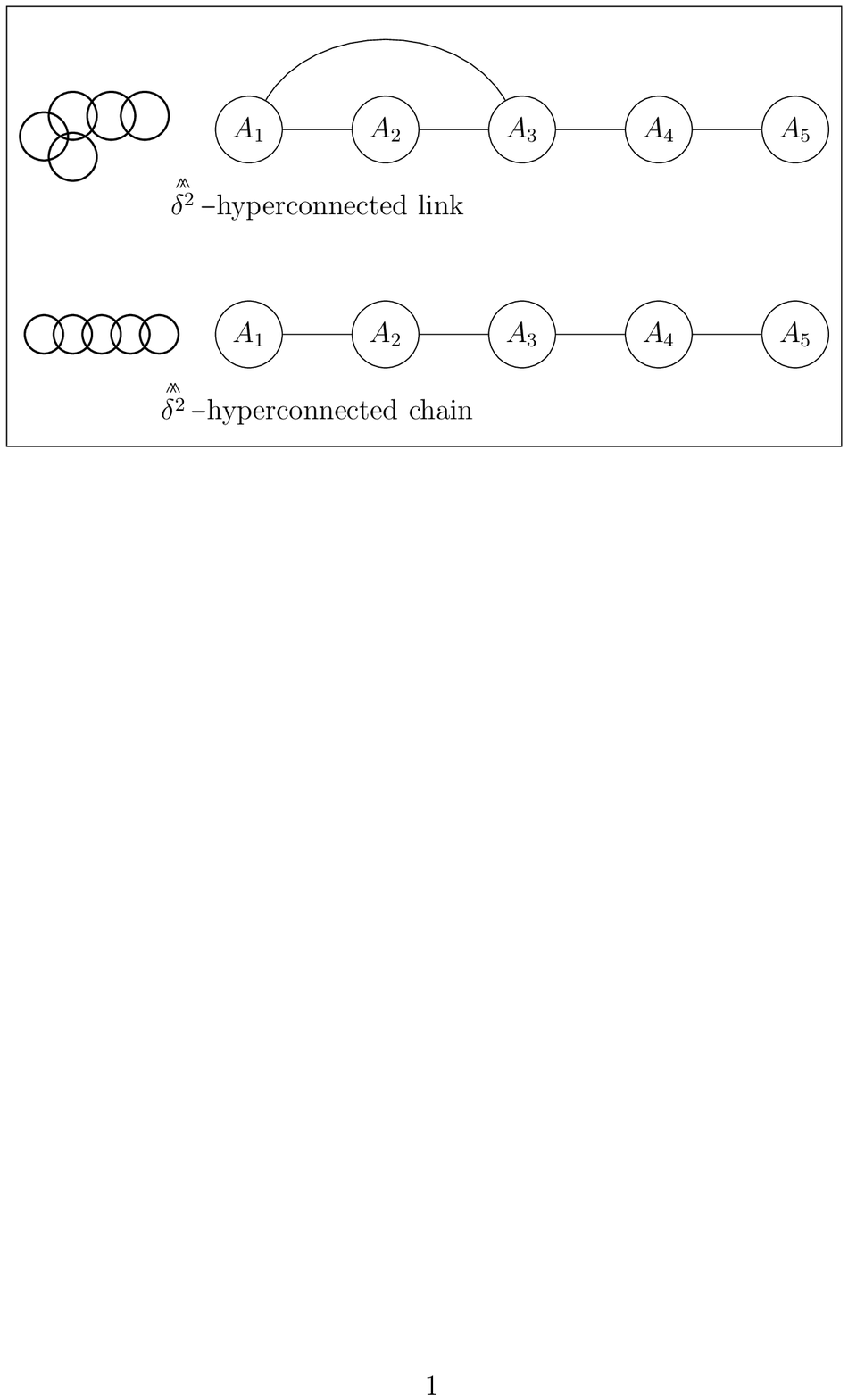}
	\caption{This figure illustrates a $\hsn{2}-$hyperconnected chains(Def.~\ref{def:hyperchain}) and $\hsn{2}-$hyperconnected link. The associated graphs are also shown. It can be seen that a hyperconnected chain is path graph $P_n$, while the hypperconnected link can have cycles.}
	\label{fig:hyplink}
\end{figure}

We have relaxed the condition that non-adjacent sets cannot be hyperconnected. We can easily illustrate this by using the notion of a graph to represent $\hsn{2}-$hyperconnected chains and links. The $\{A_i\}_{i \in \mathbb{Z}^+} \subset X$ fromt he Defs.~\ref{def:hyperchain} and \ref{def:hyperlink} are considered as nodes. We add an edge between the nodes if $\hsn{2}(A_i,A_j)=0$. We define a few terms from graph theory that can help us understand both these structures. A \emph{graph} consists of  vertices and edges between them. We consider only \emph{simple graphs}, that have no loops(edge from a vertex to itself) or multiple edges between the same node. A \emph{cycle} is a path, interms of edges, from a vertex back to itself passing through other vertices. In a cycle no edge is repeatable, and no vertex except the starting and ending vertices is repeatable. A graph with no cycles is a \emph{tree}. The number of edges connected to a particular vertex is its \emph{degree}.  
\begin{definition}\label{def:pathgraph}
	A path graph, $P_n$, is a tree with two nodes of degree $1$ and the remaining $n-2$ nodes of degree $2$. $P_n$ is a path graph with $n$ vertices.
\end{definition}
Here we present an important result.
\begin{theorem}\label{thm:pathgraph}
	Let $(X,\hsn{2})$ be a relator space and $\{A_i\}_{i =1,\cdots,n} \subset X$ be a family of subsets that forms a $\hsn{2}-$hyperconnected chain. If we construct a graph where $\{A_i\}_{i \in \mathbb{Z}^+}$ are the vertices and there is an edge for every $\hsn{2}(A_i,A_j)=0$, then this graph is a path graph, $P_n$. 
\end{theorem}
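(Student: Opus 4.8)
The plan is to identify the graph $G$ produced from the chain completely explicitly and then check the three properties in Definition~\ref{def:pathgraph}: that $G$ is a tree, that exactly two of its vertices have degree $1$, and that the remaining $n-2$ vertices have degree $2$.

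First I would pin down the edge set of $G$. Since $G$ is by convention a \emph{simple} graph, it has no loops, so the instance $\hsn{2}(A_i,A_i)=0$ forced by $|i-i|=0\leq 1$ in Definition~\ref{def:hyperchain} contributes no edge. For $i\neq j$, Definition~\ref{def:hyperchain} gives $\hsn{2}(A_i,A_j)=0$ exactly when $|i-j|=1$, and $\hsn{2}(A_i,A_j)=1$ otherwise. Hence the edge set of $G$ is precisely $\{\,\{A_i,A_{i+1}\}:1\leq i\leq n-1\,\}$; in other words $G$ is the graph $A_1-A_2-\cdots-A_n$, or equivalently the map $A_i\mapsto i$ is a graph isomorphism of $G$ onto the standard path on $\{1,\dots,n\}$ whose edges are the pairs $\{i,i+1\}$.

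Next I would read off the degrees directly from this edge set. For an interior index $1<i<n$ the neighbours of $A_i$ are exactly $A_{i-1}$ and $A_{i+1}$, so $\deg A_i=2$; for $i=1$ the only neighbour is $A_2$, and for $i=n$ the only neighbour is $A_{n-1}$, so $\deg A_1=\deg A_n=1$. Then $G$ is connected, since $A_1A_2\cdots A_n$ is a walk meeting every vertex, and $G$ has $n-1$ edges. A connected graph on $n$ vertices with $n-1$ edges is a tree, so $G$ is a tree with exactly two vertices of degree $1$ and $n-2$ vertices of degree $2$, which is $P_n$ by Definition~\ref{def:pathgraph}. The degenerate cases $n=1,2$ are immediate.

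I do not expect a genuine obstacle here; the statement is essentially an unwinding of Definition~\ref{def:hyperchain} against Definition~\ref{def:pathgraph}. The only points that need care are the simple-graph convention, so that the $i=j$ clause of the hyperconnectedness condition does not spuriously create a loop, and the boundary indices $i=1$ and $i=n$, where exactly one of the two potential neighbours $A_{i-1},A_{i+1}$ fails to exist; the rest reduces to the standard fact that a connected graph with one fewer edge than vertices is a tree.
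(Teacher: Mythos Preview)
Your proposal is correct and follows essentially the same route as the paper: read off the edge set from Definition~\ref{def:hyperchain}, check the degrees at the endpoints and at the interior vertices, verify the tree property, and invoke Definition~\ref{def:pathgraph}. The only minor difference is in the tree step: the paper argues directly that no cycle can occur because returning to a vertex would force repeating an edge, whereas you use the cleaner counting characterization (connected with $n$ vertices and $n-1$ edges); both are standard and neither changes the substance of the argument.
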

\begin{proof}
	As $\{A_i\}_{i=1,\cdots,n}$ form a $\hsn{2}-$hyperconnected chain, by Def.~\ref{def:hyperchain} there exist edges for each pair $(A_i,A_j)$, such that $|i-j| \leq 1$. We can see that for each $A_1$ and $A_n$ there is only one edge which connects to $A_2$ and $A_{n-1}$ respectively. This is as there are no $A_0$ or $A_{n+1}$. For all the other $A_i$, the condition for existence of path is satisfied twice as $\hsn{2}(A_{i-1},A_i)=0$ and $\hsn{2}(A_i,A_{i+1})=0$. Thus, there are two edges that connect to each of the $A_i$ such that $i \not \in \{1,n\}$, thus they have degree $2$. Moreover, by Def.~\ref{def:hyperchain} there exists no edge for each pair $(A_i,A_j)$ such that $|i-j|>1$. Thus, the only way to come back to a particular vertex is to come back to it via the edge used to leave it. This, violates the condition of a cycle in a simple graph which restricts the repitition of an edge. Thus, no cycles exist in this graph making it a tree. Hence, the graph is a tree with two nodes of degree $1$ and the remainig $n-2$ nodes of degree of $2$. According to Def.~\ref{def:pathgraph}, this graph is a $P_n$ or a path graph of $n$ vertices. 
\end{proof}  
 We now establish the analog of Lemma~\ref{lem:hyperchainembd} for $\hsn{2}-$hyperconnected links.
 \begin{lemma}\label{lem:hyperlinkembd}
 	Let $\{A_i\}_{i \in \mathbb{Z}^+} \subset K$ be a $\hsn{2}-$ hyperconnected link in a realtor space $(A,\hsn{k})$ and let $f:X \rightarrow Y$ be an $\hsn{2}-$continuous function. Then, $\{f(A_i)\}_{i\in \mathbb{Z}^+}$ is a $\hsn{2}-$ hyperconnected link in the relator space $(Y,\hsn{k})$.
 \end{lemma}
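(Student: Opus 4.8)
The plan is to mirror the proof of Lemma~\ref{lem:hyperchainembd}, but this time invoking only the easy (forward) direction of the defining property of $\hsn{2}$-continuity, since a $\hsn{2}$-hyperconnected link is specified by a single implication (Def.~\ref{def:hyperlink}) rather than the biconditional-flavoured pair of implications defining a chain (Def.~\ref{def:hyperchain}). First I would recall that, by hypothesis, $\{A_i\}_{i\in\mathbb{Z}^+}$ satisfies $|i-j|\le 1 \Rightarrow \hsn{2}(A_i,A_j)=0$. Then, applying Def.~\ref{def:hypercnt} to the $\hsn{2}$-continuous map $f$ with $n=2$, from $\hsn{2}(A_i,A_j)=0$ we immediately obtain $\hsn{2}(f(A_i),f(A_j))=0$. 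Composing these two implications gives $|i-j|\le 1 \Rightarrow \hsn{2}(f(A_i),f(A_j))=0$, which is precisely the single condition required by Def.~\ref{def:hyperlink} for $\{f(A_i)\}_{i\in\mathbb{Z}^+}$ to be a $\hsn{2}$-hyperconnected link in $(Y,\hsn{k})$. That completes the argument.

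The key point worth emphasising in the write-up is \emph{why} $\hsn{2}$-continuity suffices here whereas Lemma~\ref{lem:hyperchainembd} needed the stronger $\hsn{2}$-equivalence: a link imposes no constraint on the images of non-adjacent (i.e. $|i-j|>1$) sets, so we never need the reverse implication $\hsn{2}(f(A),f(B))=0 \Rightarrow \hsn{2}(A,B)=0$ that only an equivalence guarantees. In other words, continuity can merge far sets, and a link tolerates exactly that; a chain does not. I would state this contrast explicitly so the lemma reads as the natural ``relaxed'' companion to Lemma~\ref{lem:hyperchainembd}, consistent with how Def.~\ref{def:hyperlink} relaxes Def.~\ref{def:hyperchain}.

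Honestly there is no real obstacle: the proof is a one-line chain of implications once the definitions are unpacked, and the only thing to be careful about is not accidentally asserting the second (non-adjacency) clause of a chain, which is false in general for links. A secondary minor point is the harmless index/domain mismatch in the statement (the link lives in $X$ with notation $\subset K$ and ``$(A,\hsn{k})$''), which I would silently read as $\{A_i\}\subset X$ in the relator space $(X,\hsn{k})$, matching Def.~\ref{def:hyperlink} and the hypothesis $f:X\to Y$; no substantive change is needed.
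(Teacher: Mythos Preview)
Your argument is correct and is essentially identical to the paper's own proof: both unpack Def.~\ref{def:hyperlink} to get $|i-j|\le 1 \Rightarrow \hsn{2}(A_i,A_j)=0$, apply Def.~\ref{def:hypercnt} to push this through $f$, and conclude via Def.~\ref{def:hyperlink} again. Your added remark contrasting continuity with equivalence (and noting the notational mismatch in the statement) is extra commentary the paper omits, but the logical core is the same one-line composition of implications.
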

\begin{proof}
	Since $\{A_i\}_{i \in \mathbb{Z}^+}$ is $\hsn{2}-$hyperconnected link, it follows from def.~\ref{def:hyperlink} that $|i-j|\leq 1 \Rightarrow \hsn{2}(A_i,A_j)=0$. As $f:X \rightarrow Y$ is an $\hsn{2}-$continuous function, def.~\ref{def:hypercnt} implies that $\hsn{2}(A_i,A_j) \Rightarrow \hsn{2}(f(A_i),f(A_j))=0$. Thus, $|i-j| \leq 1 \Rightarrow \hsn{2}(f(A_i),f(A_j))=0$. Hence $\{A_i\}_{i \in \mathbb{Z}^+}$ is a $\hsn{2}-$hyperconnected link in the relator space $(Y,\hsn{2})$ as per the Def.~\ref{def:hyperlink}
\end{proof}
Now let us try to relax the definition of path given in Def.~\ref{def:hyperpath}. We start by considering the decomposition of $[0,1]$ into $n$ intervals as $\{A_i^{i+1}=[i-\eta,i+0.1-\eta]\}$ for $i=0,\cdots,n-1$ and arbitrarily small $\eta \in \mathbb{R}^+$. These, can easily be confirmed as a $\hsn{2}-$hyperconnected link. Because for $|i-j|=1$, the interors of intervals intersect i.e. $int(A_i)\cap int(A_j) \neq \emptyset$. This leads to $\hsn{2}(A_i,A_j)=0$. Thus, according to Def.~\ref{def:hyperlink} this decomposition is a hyperconnected link. Thus, if there is a $\hsn{2}-$continuous fucntion such that $f(0)=x$ and $f(1)=y$, we get a hyperconnected link $\{f(A_i^{i+1})\}$ for $i=0,\cdots,n-1$, as per Lem.~\ref{lem:hyperlinkembd}. Moreover, as $0 \in A_0^{0.1}$ and $1 \in A_{n-1}^n$, $f(0)=x \in f(A_0^{0.1})$, $f(1)=y \in f(A_{n-1}^n)$ repectivley. Thus, we have link between the points $x$ and $y$. As $\{A_i^{i+1}\}_i$ gives a path going from $0$ to $1$, where $\hsn{2}(A_i,A_j)=0$ allows moving from $A_i$ to $A_j$ due to $int(A_i) \cap int(A_j) \neq \emptyset$. The chain $\{f(A_i^{i+1})\}_i$ gives a path from $x$ to $y$ as according to Def.~\ref{def:hypercnt} the $\hsn{2}-$continuous fucntion $f$ preserves hyperconnectedness of sets. 

What is the difference between the path as defined by Def.~\ref{def:hyperpath} and this notion based on $\hsn{2}-$connected link and $\hsn{2}-$continuous function? The answer has been illustrated in example.~\ref{ex:exmploop} and Figs.~\ref{fig:example4} \& \ref{fig:hyplink}.
Let us explain this with the help of an example.
\begin{example}
	Fig.~\ref{fig:hyplink} illustrates a $\hsn{2}-$hyperconnected link and a $\hsn{2}-$hyperconnected chain. It can be seen that in both the cases we can go from $A_1$ to $A_5$. In case of the chain there is only one way $(A_1,A_2,A_3,A_4,A_5)$. In case of the link there are multiple ways,
	\begin{align*}
	& (A_1,A_2,A_3,A_4,A_5) \\
	& (A_1,A_3,A_4,A_5)\\
	& (A_1,A_2,A_3,A_1,A_2,A_3,A_4,A_5)
	\end{align*}
	Thus, in the case of a link we have mulitple paths, including the one that was yielded by the chain. This is due to fact that in the case of a link(Def.~\ref{def:hyperlink}), the connection between $A_i$ and $A_j$ for $|i-j|>1$ is not restricted. Because, the condition $|i-j|>1 \Rightarrow \hsn{2}(A_i,A_j)$ is not imposed in Def.~\ref{def:hyperlink}. 
\end{example}
Thus, we can provide a definition of a path that is more relaxed than the one presented in Def.~\ref{def:hyperpath}.
\begin{definition}\label{def:loop}
Let $(X,\hsn{k})$ be a relator space and $x,y \in X$ be two points in it. Then, a $\hsn{2}-$continuous function $f:[0,1] \rightarrow X$ such that $f(0)=x,f(1)=y$ is a path from $x$ to $y$ that allows loops and self interesections.	
\end{definition}
Here, we present a couple of definitions from graph theory that will help in understanding the subsequent result. A \emph{subgraph} is graph obtained from an other graph by using a subset of its vertices and edges. If the subgraph contains all the vertices of the parent graph, it is a \emph{spanning subgraph}.
\begin{theorem}\label{thm:subgraph}
	Let $(X,\hsn{k})$ be a relator space and $\{A_i\}_{i \in \mathbb{Z}^+}$ be a family of sets where $A_i \subset X$. Let $\mathcal{L}(\{A_i\}_i)$ be a $\hsn{2}-$hyperconnected link and $\mathcal{C}(\{A_i\}_i)$ be a $\hsn{2}-$hyperconnected chain consisting of $\{A_i\}_{i \in \mathbb{Z}^+}$. We construct a graph for each of the structures in which the $\{A_i\}_i$ are the nodes and there is a edge for each $\hsn{2}(A_i,A_j)=0$. $G(\mathcal{L})$ is the graph for $\mathcal{L}(\{A_i\}_i)$  and $G(\mathcal{C})$ is the graph for $\mathcal{C}(\{A_i\}_i)$. Then $G(\mathcal{C})$ is a spanning subgraph of $G(\mathcal{L})$.
\end{theorem}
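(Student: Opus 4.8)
The plan is to compare the edge sets of the two graphs directly. By construction $G(\mathcal{C})$ and $G(\mathcal{L})$ are built on the same node set $\{A_i\}_{i \in \mathbb{Z}^+}$, so a spanning subgraph is exactly a subgraph with the full vertex set; hence it suffices to prove the inclusion of edge sets, i.e. that every edge of $G(\mathcal{C})$ is also an edge of $G(\mathcal{L})$.

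First I would pin down the edge set of $G(\mathcal{C})$. Since $\mathcal{C}(\{A_i\}_i)$ is a $\hsn{2}-$hyperconnected chain, Def.~\ref{def:hyperchain} gives both implications: $|i-j|\le 1 \Rightarrow \hsn{2}(A_i,A_j)=0$ and $|i-j|>1 \Rightarrow \hsn{2}(A_i,A_j)=1$. Together these say that in the chain $\hsn{2}(A_i,A_j)=0$ holds \emph{precisely} when $|i-j|\le 1$, so $A_i$ and $A_j$ are joined by an edge in $G(\mathcal{C})$ if and only if $|i-j|\le 1$. Next I would invoke the link condition: since $\mathcal{L}(\{A_i\}_i)$ is a $\hsn{2}-$hyperconnected link, Def.~\ref{def:hyperlink} still guarantees $|i-j|\le 1 \Rightarrow \hsn{2}(A_i,A_j)=0$, so each such pair also yields an edge of $G(\mathcal{L})$. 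Chaining these observations, any edge of $G(\mathcal{C})$ joins some $A_i,A_j$ with $|i-j|\le 1$, hence is an edge of $G(\mathcal{L})$; combined with the equality of vertex sets, $G(\mathcal{C})$ is a spanning subgraph of $G(\mathcal{L})$.

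There is essentially no obstacle here — the argument is a one-step set inclusion. The only point demanding care is that the containment relies on the ``only if'' half of the chain's edge characterization, which comes from the clause $|i-j|>1 \Rightarrow \hsn{2}(A_i,A_j)=1$ present in Def.~\ref{def:hyperchain} but absent from Def.~\ref{def:hyperlink}; dropping it would allow $G(\mathcal{C})$ to carry extra edges and the inclusion could fail. Conversely, $G(\mathcal{L})$ may well contain edges between $A_i,A_j$ with $|i-j|>1$, which is exactly why the containment is in general proper, as already seen in the link of Fig.~\ref{fig:hyplink}. If desired, I would close by remarking that this places $G(\mathcal{C})$ as the unique maximal path-graph spanning subgraph of $G(\mathcal{L})$ on the linearly ordered vertex set $\{A_i\}$, in view of Theorem~\ref{thm:pathgraph}.
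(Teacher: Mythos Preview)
Your argument is correct and matches the paper's proof essentially step for step: both note that the two graphs share the vertex set $\{A_i\}$, that the chain conditions force the edge set of $G(\mathcal{C})$ to be exactly the pairs with $|i-j|\le 1$, and that the link condition guarantees all such pairs are edges of $G(\mathcal{L})$, whence $G(\mathcal{C})$ is a spanning subgraph of $G(\mathcal{L})$. Your additional observation that the clause $|i-j|>1 \Rightarrow \hsn{2}(A_i,A_j)=1$ is what pins down the edge set of $G(\mathcal{C})$ (and that its absence in Def.~\ref{def:hyperlink} is what allows the containment to be proper) is exactly the point the paper makes as well.
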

\begin{proof}
	From the definition of $\hsn{2}-$hyperconnected chain, Def.~\ref{def:hyperchain}, we can see that $|i-j| \leq 1 \Rightarrow \hsn{2}(A_i,A_j)=0$ which is same as the condition for $\hsn{2}-$hyperconnected link,  Def.~\ref{def:hyperlink}. Thus, $G(\mathcal{L})$ has all the edges in $G(\mathcal{C})$. Furthermore, as Def.~\ref{def:hyperlink} does not impose the condition $|i-j|>1 \Rightarrow \hsn{2}(A_i,A_j)=1$, the graph $G(\mathcal{L})$ can have edges not in $G(\mathcal{C})$. Moreover, by definition the vertex set of $G(\mathcal{C})$ and $G(\mathcal{L})$ is the same i.e. $\{A_i\}_{i \in \mathbb{Z}^+}$. Thus, $G(\mathcal{C})$ is a spanning subgraph of $G(\mathcal{L})$. 
\end{proof}  
 We have defined the path(Def.~\ref{def:hyperpath}) as being equivalent to a $\hsn{2}-$hyperconnected chain and we have relaxed this notion to that of a $\hsn{2}-$hyperconnected link in Def.~\ref{def:hyperlink}. Thm.~\ref{thm:subgraph} states that the relaxed notion of a path contains the original path in it as a subgraph. 
 
 \section{Applications}
 In this section we consider the application of hyperconnected paths to define the notion of cycles connecting the centroids of triangles in a triangulated image. The triangulation is a CW complex that can be equipped with $\hsn{2}$ to result in a relator space as in \cite{ahmad2018maximal}. We start with a general notion of a cycle obtained by the joining centroids of selected triangles to illustrate the general methodology. This can then be specified to the case of maximal centroidal vortices \cite[Def.$4$, Thm.~$7$]{ahmad2018maximal}.
 
 Once we have selected the triangles that are to contribute their centroids as a vertices of the cycle, we have to construct the cycle. As defined previously, cycle is a path from the starting vertex back to itself traversing the other specified vertices in the process. To keep things simple we assume that the path we choose has no self intersection and loops. Thus, we refer back to Def.~\ref{def:hyperpath}, that specifies the path as an embedding of $[0,1]$ into the space under a $\hsn{2}-$equivalence. We have established in Thm.~\ref{thm:pathequivchain}, that the existence of a path between two points in a space is equivalent to the existence of a hyperconnected chain $\{A_i\}_{i=1,\cdots,n}$ such that $A_1$ contains the starting and $A_n$ the terminating vertex. We formalize this process as a sewing operator, which was introduced as a building block of physical geometry in \cite{Peters2016arXivProximalPhysicalGeometry}.
 \begin{definition}\label{def:sew}
 	Let $(X,\hsn{2})$ be a relator space, $x,y$ be two points in $X$ and $n \in \mathbb{Z}^+$. We start by choosing two sets $A_1, A_n \in 2^X$ such that $x \in A_1, y \in A_n$ while restricting $x \not \in A_n, y \not \in A_1$. We construct the following sets,
 	\begin{align*}
 	\text{for } i=&2,\cdots,n-2 \text{:}\\
 	\mathcal{A}_i=&\{S \in 2^X: (\forall T \in \mathcal{A}_{i-1}) \Rightarrow (int(S) \cap int(T) \neq \emptyset), \, (\forall U \in \mathcal{A}_{i-j}, j>1) \Rightarrow \\
    &(int(S) \cap int(U) \neq \emptyset) and \; (\forall V \in \mathcal{A}_n) \Rightarrow (int(S) \cap int(V)=\emptyset)\}\\
 	\text{for } i=&n-1 \text{:}\\
 	\mathcal{A}_i=&\{S \in 2^X: (\forall T \in \mathcal{A}_{i-1}) \Rightarrow (int(S) \cap int(T) \neq \emptyset), \, (\forall U \in \mathcal{A}_{i-j}, j>1) \Rightarrow \\
 	&(int(S) \cap int(U) \neq \emptyset) and \; (\forall V \in \mathcal{A}_n) \Rightarrow (int(S) \cap int(V)\neq \emptyset)\}\\
 	\mathcal{A}_1=&A_1, \quad \mathcal{A}_n=A_n 
 	\end{align*}
 	where $int(X)$ is the interior of set $X$. Then,
 	\begin{align*}
 	\mathop{sw} \limits_n(x,y)=\{A_i:A_i \in \mathcal{A}_i\}_{i=1,\cdots,n}
 	\end{align*}
 	is the sewing operator of degree $n$ between points $x,y \in X$ is a family of sets.  
 \end{definition}
Here, we present an other result that directly follows.
\begin{theorem}
	Let $(X,\hsn{2})$ be a relator space and $x,y$ be two points in $X$. Then, $\mathop{sw} \limits_n(x,y)$ is a $\hsn{2}-$hyperconnected chain.
\end{theorem}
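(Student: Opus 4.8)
The plan is to verify directly that the family $\mathop{sw}\limits_n(x,y) = \{A_i\}_{i=1,\dots,n}$ produced by Definition~\ref{def:sew} satisfies the two defining conditions of a $\hsn{2}-$hyperconnected chain in Definition~\ref{def:hyperchain}, namely that $|i-j|\leq 1 \Rightarrow \hsn{2}(A_i,A_j)=0$ and $|i-j|>1 \Rightarrow \hsn{2}(A_i,A_j)=1$. The bridge between the set-interior language of Definition~\ref{def:sew} and the relation $\hsn{2}$ is the strong hyperconnectedness axiom \cite[axiom \textbf{(snhN5)}]{ahmad2018maximal} recalled in the preliminaries: $\hsn{2}(S,T)=0$ exactly when $int(S)\cap int(T)\neq\emptyset$. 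So the whole proof reduces to a bookkeeping argument on the interiors of consecutive and non-consecutive members of the constructed family.

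First I would treat the ``near'' condition. For an index $i$ with $2\leq i\leq n-2$, the defining clause for $\mathcal{A}_i$ forces $int(A_i)\cap int(A_{i-1})\neq\emptyset$ for the chosen $A_{i-1}\in\mathcal{A}_{i-1}$, and the symmetric clause applied at stage $i+1$ (or at stage $n-1$, or the endpoint clause for $\mathcal{A}_n$) gives $int(A_i)\cap int(A_{i+1})\neq\emptyset$; the two endpoint cases $i=1$ and $i=n$ only need the single neighbour $A_2$, resp. $A_{n-1}$, which is handled by the first clause at stage $2$ and the ``$i=n-1$'' clause respectively. Translating via \cite[axiom \textbf{(snhN5)}]{ahmad2018maximal} yields $\hsn{2}(A_i,A_{i+1})=0$ for every $i$, hence $|i-j|\leq 1 \Rightarrow \hsn{2}(A_i,A_j)=0$. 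Second, for the ``far'' condition, take $i<j$ with $j-i>1$. If $j=n$, the clause ``$(\forall V\in\mathcal{A}_n)\Rightarrow(int(S)\cap int(V)=\emptyset)$'' appearing in the definition of $\mathcal{A}_i$ for $i\leq n-2$ gives $int(A_i)\cap int(A_n)=\emptyset$; together with the restriction $x\notin A_n$, $y\notin A_1$ this also disposes of the pair $(1,n)$. If $j<n$, then the clause ``$(\forall U\in\mathcal{A}_{i-\ell},\ \ell>1)\Rightarrow(int(S)\cap int(U)\neq\emptyset)$'' — read with the roles reversed, i.e. applied at stage $j$ against the earlier stage $i$ with $j-i>1$ — is exactly where one needs the definition to encode disjointness; here I would point out that the construction is meant to enforce $int(A_i)\cap int(A_j)=\emptyset$ whenever $|i-j|>1$, and invoke \cite[axiom \textbf{(snhN5)}]{ahmad2018maximal} to get $\hsn{2}(A_i,A_j)=1$. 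With both implications established, Definition~\ref{def:hyperchain} gives that $\mathop{sw}\limits_n(x,y)$ is a $\hsn{2}-$hyperconnected chain.

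The main obstacle is the ``far'' direction for interior indices $1<i<j<n$ with $j-i>1$: as literally written, the clause about $\mathcal{A}_{i-j}$ (with $j>1$) in Definition~\ref{def:sew} asserts that $int(S)$ \emph{meets} the interiors of all strictly earlier stages, which — if taken at face value — would make every pair near and contradict the chain property. I expect the intended reading is that this clause should assert \emph{emptiness} of those intersections (a typo: ``$\neq\emptyset$'' for ``$=\emptyset$''), in parallel with the ``$(\forall V\in\mathcal{A}_n)$'' clause; under that reading the argument above goes through cleanly. In writing the proof I would therefore either adopt the corrected clause explicitly or, more conservatively, note that the construction selects at each stage a set whose interior meets only the two neighbouring stages' interiors, and derive the two implications of Definition~\ref{def:hyperchain} from that selection property together with \cite[axiom \textbf{(snhN5)}]{ahmad2018maximal}. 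Everything else is routine translation between the interior-intersection formulation and the $\hsn{2}$ relation.
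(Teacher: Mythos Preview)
Your proposal is correct and follows essentially the same route as the paper: verify the two clauses of Definition~\ref{def:hyperchain} by reading off the interior-intersection conditions built into Definition~\ref{def:sew} and translating them into statements about $\hsn{2}$ via the strong-hyperconnectedness axiom. The paper's own proof cites axiom \textbf{(snhN3)} rather than \textbf{(snhN5)} and simply asserts ``by definition'' for the far case without flagging the apparent $\neq/=$ typo you correctly identified, but otherwise the two arguments are identical.
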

\begin{proof}
	By Def.~\ref{def:sew} it can be seen that $\mathop{sw} \limits_n(x,y)$ yields a family of sets. By definition, $int(A_i) \cap int(A_{i-1}) \neq \emptyset$ and $int(A_i) \cap int(A_{i+1}) \neq \emptyset$ for $i=2,\cdots,n-1$. Moreover by definition, $int(A_1) \cap int(A_2) \neq \emptyset$ and $int(A_n) \cap int(A_{n-1}) \neq \emptyset$. Thus, $|i-j|\leq 1 \Rightarrow \hsn{2}(A_i,A_j)=0 $ as per \cite[axiom~\textbf{(snhN3)}]{ahmad2018maximal}. Moreover, by definition, it is also clear that $|i-j|>1 \Rightarrow int(A_i) \cap int(A_j) = \emptyset$. Thus, by \cite[axiom~\textbf{(snhN3)}]{ahmad2018maximal} we have $|i-j|>1 \Rightarrow \hsn{2}(A_i,A_j)=1$. From Def.~\ref{def:hyperchain}, we can see that these two conditions are required for a family of sets to be a $\hsn{2}-$hyperconnected chain. Hence, proved that $\mathop{sw} \limits_n(x,y)$ is a hyperconnected chain.    
\end{proof}
From, this we can now define a cycle which consists of vertices, $(v_1,v_2,\cdots,v_n)$. Then, 
\begin{definition}\label{def:cycimg}
	Let $(X,\hsn{2})$ is a relator space and $v_i \in X$ for $i \in \mathbb{Z}^+$. Then,
	\begin{align*}
	cyc(\{v_i\}_{i \in \mathbb{Z}^+})=\{\mathop{sw} \limits_n(v_n,v_1),\mathop{sw} \limits_n(v_i,v_{i+1})\,for i=1,\cdots,n-1\}
	\end{align*}
	is the cycle starting and ending at $v_1$ and traversing the remaining $v_i$ for $i=2,\cdots,n$.
\end{definition}
\begin{figure}
	\centering
	\begin{subfigure}[Original Figure]
		{\includegraphics[width=2in]{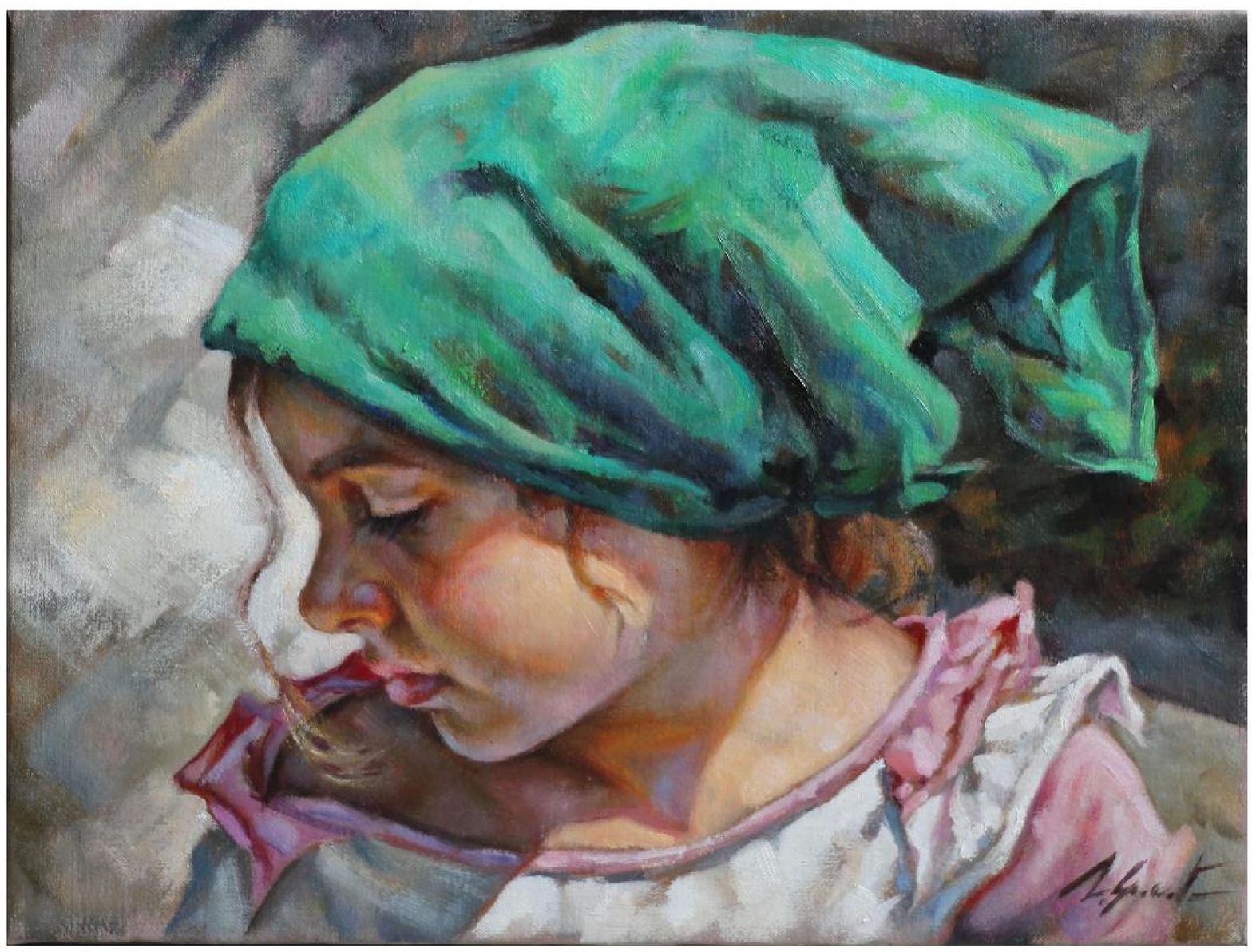}
			\label{subfig:orig1}}
	\end{subfigure}
	\begin{subfigure}[Cycle in a triangulated image]
		{\includegraphics[width=2in]{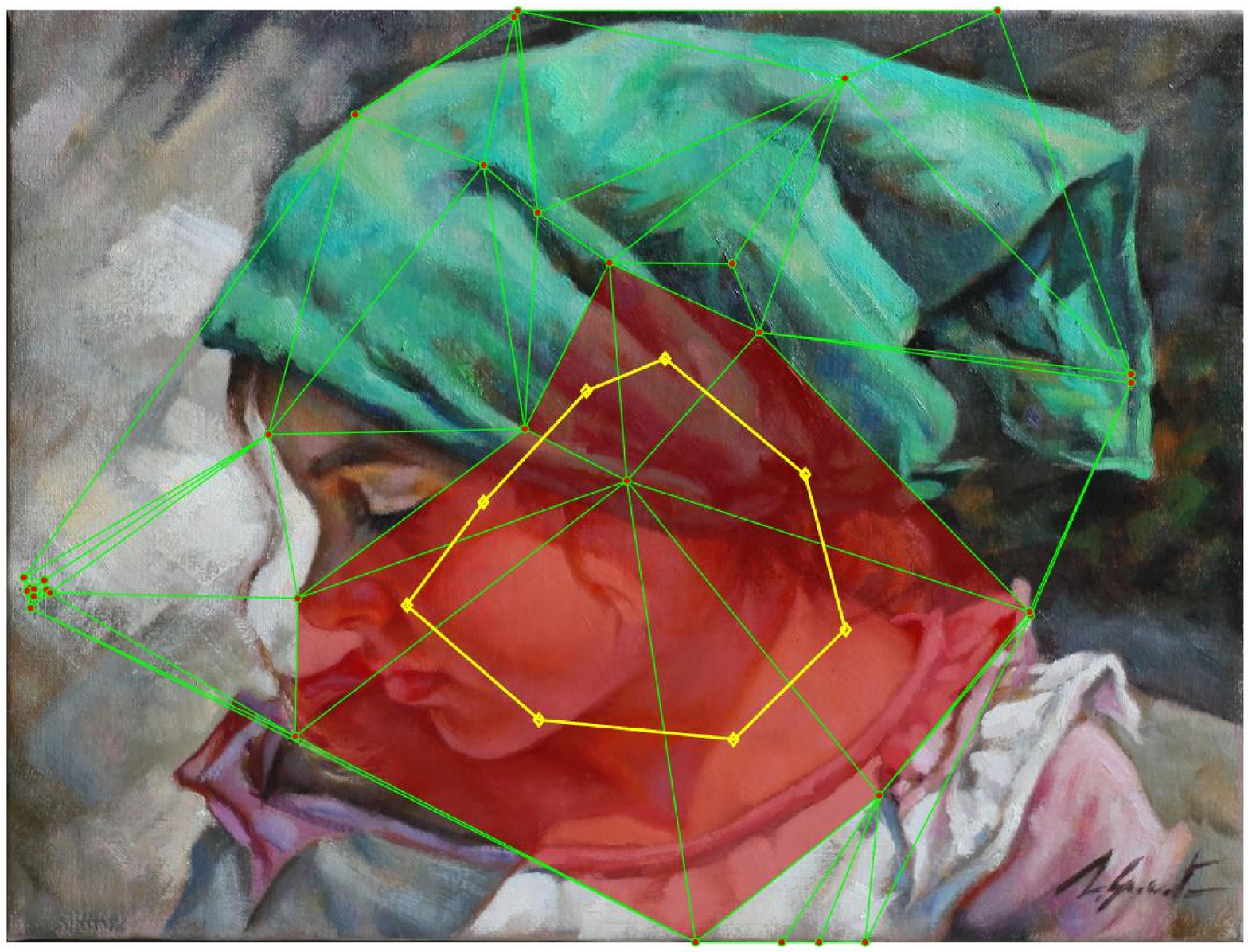}
			\label{subfig:cyc1}}
	\end{subfigure}
	\begin{subfigure}[Original Figure]
		{\includegraphics[width=2in]{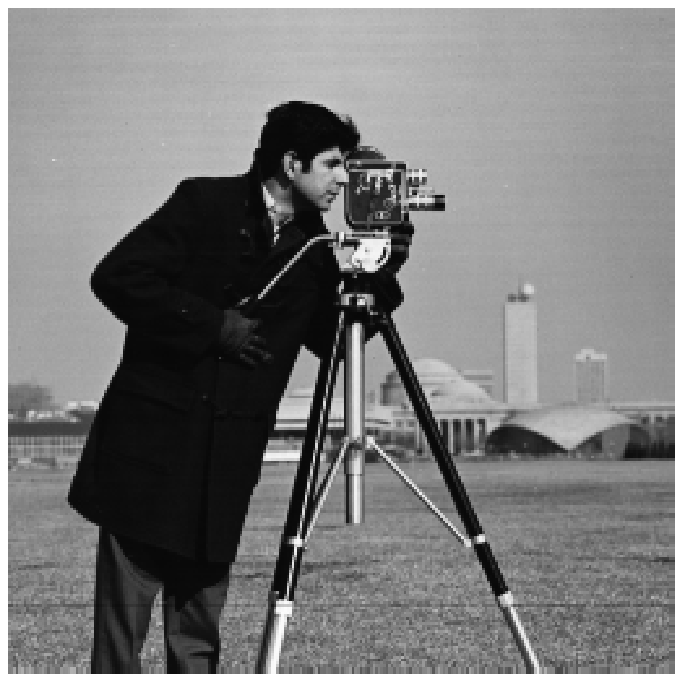}
			\label{subfig:orig2}}
	\end{subfigure}
	\begin{subfigure}[Cycle in a triangulated image]
		{\includegraphics[width=2in]{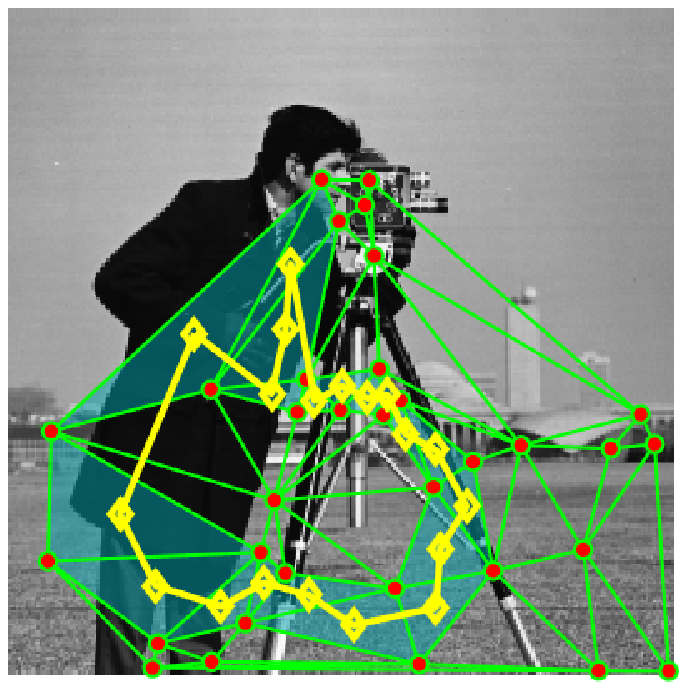}
			\label{subfig:cyc2}}
	\end{subfigure}
	\caption{Fig.~\ref{subfig:orig1} represents the original image and Fig.~\ref{subfig:cyc1} illustrates a cycle drawn an image using centroids of triangulation. Fig.~\ref{subfig:orig2} is the \emph{cameraman.tiff} image from MATLAB and Fig.~\ref{subfig:cyc2} illustrates a cycle drawn using centroids of triangles in the triangulation of the image.}
	\label{fig:cycapp}	
\end{figure}

Thus, we have defined the notion of a cycle in the triangulation as a $\hsn{2}-$hyperconnected chain, via the notion of a sewing operator. It can be seen that the cycle as defined in Def.~\ref{def:cycimg} is a concatenation of hyperconnected chains or as from the equivalence established in Thm.~\ref{thm:pathequivchain}, as a concatenation of paths without self intersections. Here, a point to note is that a cycle is homeomorphic to a circle. We have seen in Fig.~\ref{fig:example4} the interval $[0,1]$ can be mapped onto a circle, but, as mentioned in example~\ref{ex:exmploop}, this map involves a gluing and is thus a $\hsn{2}-$continuous map. Here, the cycle is built as a concatenation of embeddings of $[0,1]$ under $\hsn{2}-$equivalences. Even though the individual sections of the cycle between two vertices is a $\hsn{2}-$hyperconnected chain, the concatenation is not a $\hsn{2}-$hyperconnected chain, since the last set is hyperconnected to the first set. Hence, the cycle is in itself a $\hsn{2}-$hyperconnected link.

Let us now look at the how these cycles are embedded in the triangulation of a digital image. The triangluation is generated by selecting keypoints from the image. We define the concept of a \emph{hole}, which is a region containing vertexes wiht the same description (such vertexes, in this case, are pixels with uniform intensity).   The centroids of these holes are considered as the seed points of triangulation. Here we draw cycles on the images using centroids of triangles in the spoke complexes, as defined in \cite[Def.$4$]{ahmad2018maximal}.

We present two images and cycles in them in Fig.~\ref{fig:cycapp}.   Fig.~\ref{subfig:orig1} is a portrait depicting a girl\footnote{Many thanks to Alesssandro Granata, Salerno, Italy for his permission to use his painting in this study of image geometry.}. From the triangulation we select the triangles that are in a maximal nuclear cluster, and draw on the triangulated image in Fig.~\ref{subfig:cyc1}. A collection of triangles that share a non empty intersection is called an \emph{Alexandroff nerve}, and the nerve with largest number of triangles is the \emph{maximal nuclear cluster}. The common intersection of the triangles in maximal nuclear cluster is called the \emph{nucleus}.  Next, consider the cameraman image \emph{cameraman.tiff} in the stock images of MATLAB depicted in Fig.~\ref{subfig:orig2}. For this image we construct a triangulation similar to the previous image. In this case, to construct the cycle, we use the triangles in $skcx_1$ (spoke complex of degree $1$), which are the triangles sharing a nonempty intersection with the triangles in maximal nuclear cluster but have an empty intersection with the nucleus itself. This cycle is depicted in Fig.~\ref{subfig:cyc2} and lies on the body and tripod of the cameraman in the image.

\section{Conclusion}
In this paper we revist the concepts of CW complex and paths in a topological space, in view of hyperconnectedness, that is a genaralization of proximity relations. The main results of the paper include a hyper-connectedness form of CW complex, and the existence of paths with or without self intersections. We equate these notions to the existence of hyperconnected chains and links in a relator space $(X,\hsn{2})$. We conclude with the application of these concepts to define cycles in a trinagulated image.
   
\bibliographystyle{amsplain}
\bibliography{HCVrefsnew}
\end{document}